\documentclass{article}

\usepackage{amsfonts} % Provides additional mathematical fonts
\usepackage{amsmath} % Enhances mathematical typesetting
\usepackage{amssymb} % Additional mathematical symbols
\usepackage{amsthm} % Defines theorem, lemma, corollary, and proof environments

\usepackage{enumerate} % Customizable enumeration environments
\usepackage{dutchcal} % Enables calligraphic math symbols

\usepackage{graphicx} % Required for inserting images
\usepackage{xcolor} % Allows text and background color customization
\usepackage{hyperref} % Enables hyperlinks in the document
\hypersetup{
    colorlinks=true,
    linkcolor=blue,
    filecolor=magenta,
    urlcolor=blue,
}
\usepackage{tikz}
\usetikzlibrary{matrix,arrows,positioning,decorations.markings,decorations.pathmorphing}

\usepackage{mathtools} % Improvements over amsmath, including \coloneqq
\usepackage[activate={true,nocompatibility},final,tracking=true,kerning=true,spacing=true,factor=1100,stretch=10,shrink=10]{microtype}
\microtypecontext{spacing=nonfrench}
\usepackage{authblk}  % Package for affiliations
\usepackage{orcidlink} % For ORCID icons 

\newtheorem{theorem}{Theorem}

\newtheorem{definition}[theorem]{Definition}

\newtheorem{lemma}[theorem]{Lemma}

\newtheorem{proposition}[theorem]{Proposition}
\newtheorem{remark}[theorem]{Remark}

\newcommand{\R}{\mathbb{R}}
\newcommand{\g}{\tilde{\mathfrak{g}}}
\newcommand{\kk}{\tilde{\mathfrak{k}}}

\newcommand{\A}{\mathcal{A}}
\newcommand{\vol}{\mathbf{v}}
\newcommand{\tp}{\tilde{p}}

\newcommand{\llangle}{\mathopen{\langle\!\langle}}
\newcommand{\rrangle}{\mathclose{\rangle\!\rangle}}

\title{Hamiltonian Reduction in Affine Principal Bundles}

\author[1]{M.\'A. Berbel \orcidlink{0000-0002-4269-9599}}  
\author[2]{M. Castrill\'on L\'opez \orcidlink{0000-0001-7673-9870}}

\affil[1]{Departamento de Matemática Aplicada, Universidad Pontificia Comillas,
Alberto Aguilera 25, 28015-Madrid, Spain\\
    \texttt{maberbel@comillas.edu}}
\affil[2]{Departamento de \'Algebra, Geometr\'\i a y Topolog\'\i a, Universidad Complutense de Madrid, Plaza de las Ciencias 3, 28040-Madrid, Spain\\
    \texttt{mcastri@mat.ucm.es}}
\date{}

\begin{document}

\maketitle

 \emph{Dedicated to professor M.C. Mu\~noz Lecanda, with admiration and gratitude}
 
\begin{abstract}
    This paper presents a Hamiltonian reduction procedure for field theories over affine principal bundles introducing a canonical identification to describe the reduced multisymplectic space without the introduction of a connection. The main goal is to provide a Hamiltonian analogue of the Lagrangian reduction theory developed in \cite{affinered}. The core of this work lies in the derivation of this canonical identification, the reduced Hamilton–Cartan equations, and a reduced covariant bracket that describes the dynamics. Finally, this theoretical framework is illustrated with a fundamental example: molecular strands.
    
\end{abstract}
\vspace{4mm}

\noindent \em Keywords: \em Affine principal bundle, bracket, field theories, Hamiltonian, reduction.

\vspace{4mm}

\noindent \em Mathematics Subject Classiﬁcation 2020: \em 70S05, 58D19, 70G45, 70S10

\section{Introduction}

Reduction procedures in field theories with symmetries, either in the Lagrangian or in the Hamiltonian frameworks, provide a privileged tool to simplify problems and to understand some of their properties. With respect to the Hamiltonian side (on which we focus in this article), over the years, two primary approaches have emerged in the literature for tackling geometric reduction. The first approach involves the introduction of a (multi)momentum map (for example, see \cite{MultiGroupoids, RemarksMultiSymp, Madsen12, Madsen13}) with the aim to generalize the Marsden--Weinstein reduction Theorem within the multisymplectic formalism developed in \cite{multi_formalism, multisymplectic}. Another approach generalizes Poisson-Poincar\'e reduction by proposing a reduction scheme for a covariant bracket formulation.

The later is the case of \cite{someremarks}, where the covariant bracket formulation developed in \cite{PoissonForms, CovariantPoisson} is reduced for the particular case of a $K$-principal bundle $P\to M$. The key idea in \cite{someremarks} is to describe the reduction of the polysymplectic space $\Pi_P=TM\otimes V^*P\otimes \wedge ^n T^*M$ ($n=\mathrm{dim}M$) via the natural identification
\begin{equation} \label{eq: someremarks}
        \Pi_{P}/ K\cong T M \otimes \kk^{*} \otimes \bigwedge^{n} T^*M,
\end{equation}
where $\tilde{ \mathfrak{k}}$ is the adjoint bundle. 
Subsequently, these results were generalized in \cite{PoissonPoincare} to arbitrary bundles $P\to M$ in which a Lie group $K$ is acting vertically and properly. Then, $P\to P/K$ is a principal bundle and choosing a principal connection $\A$, the following identification provides a description of the reduced polsymplectic space,
\begin{equation} \label{eq: Poisson-Poincare}
        [\text{hor}^*_{\A}]\oplus [J]_G :\Pi_{P}/ K\cong_{\A}\left(T M \otimes \kk^{*} \otimes \bigwedge^{n} T^*M\right) \oplus \Pi_{P/K},
\end{equation}
where $\text{hor}^*_{\A}$ is the dual morphism to the horizontal lift provided by $\A$ and $J$ is the equivariant momentum map of the action on $T^*P$. However, identification \eqref{eq: Poisson-Poincare} is not canonical and depends essentially on the chosen auxiliary connection $\A$. This raises the question of whether there is a formulation of these reduction procedures that does not necessitate the introduction of external elements that may not have a clear physical interpretation.

Affine principle bundles are remarkably relevant examples in field theories constructed from a $K$-principal bundle $Q\to M$ and an associated vector bundle $E=(Q\times V)/K\to M$. These bundles can be used to describe charged molecular rods \cite{MolStrand} and other models using the $G$-strand building introduced in \cite{matrixgstrands,gstrands}, whenever $G$ is a semidirect product. %Algún otro ejemplo??
The group $K$ acts on the affine principal bundle $P=Q\times E$ and the Poisson-Poincar\'e reduction developed in \cite{PoissonPoincare} could be applied to reduce any $K$-invariant field theory on $P$. Yet, the relevance of these bundles (see also, \cite{AffineLieP}) justifies a specific study avoiding the introduction of any auxiliary connection in the process of reduction. This approach was explored in \cite{affinered}, where the Lagrangian reduction of field theories on affine principal bundles was studied without relying on a principal connection. The primary goal of the present paper is to develop a Hamiltonian reduction procedure in which the splitting \begin{equation} \label{eq: H1dentification}
        \Pi_{P}/ K\cong\left(T M \otimes \kk^{*} \otimes \bigwedge^{n} T^*M\right) \oplus \Pi_{E}
    \end{equation}
is canonical. The results obtained herein therefore comprise the Hamiltonian counterpart to the theory developed in \cite{affinered}.

The structure of this paper is as follows. Section \ref{sec: Lagrangian Affine} provides a brief introduction to Affine Principal bundles and the Lagrangian reduction of field theories defined over them. Section \ref{sec: multi} then reviews key aspects of Hamiltonian field theories, including multisymplectic and polysymplectic structures, as well as a covariant bracket formulation. In Section \ref{sec: covariant bracket}, we explore these concepts in the context of Affine principal bundles. The core of this work lies in Section \ref{sec: reduction}, where we establish the canonical identification \eqref{eq: H1dentification}, derive the reduced Hamilton–Cartan equation, and introduce a reduced covariant bracket that describes the dynamics. Finally, Section \ref{sec: example} demonstrates the practical relevance of our framework by applying it to a fundamental example: molecular strands.

\section{Lagrangian Reduction in Affine Principal Bundles}  \label{sec: Lagrangian Affine}

Let $\pi : Q\to M$ be a principal $K$-bundle, and consider a left linear representation of $K$ on a real vector space $V$. Let $E = (Q\times V)/K \to M$ be the associated vector bundle. We consider the affine group $G = K \ltimes V$ as the semidirect product with group operation
\[
(g, v) \cdot (g', v') = (gg', gv' + v), \quad \forall g, g' \in K, \forall v, v' \in V.
\]
The \em affine principal bundle defined by $Q\to M$ and the representation of $K$ on $V$ \em is the product bundle $P = Q \times_M E \to M$, which is a $G$-principal bundle, with right action
\[
R(g,v)(u_x, e_x) = (R_g u_x, e_x + [(u_x, v)]_K),
\]
for any $(u_x, e_x) \in P$, $(g, v) \in G$.

The group $K$ embeds as a closed subgroup of $G = K \ltimes V$ via the injection $g \in K \mapsto (g, 0)$. Similarly, the bundle $Q$ is a principal subbundle of $P = Q \times_M E$ via $u_x \in Q \mapsto (u_x, 0_x)$, where $0_x$ is the zero vector in the fiber $E_x$ of $E$.

Let $C(Q)$ be the connection bundle of the principal bundle $Q$. Then, the mapping

\begin{equation}\label{eq: Lidentification}
\begin{aligned}
    \Psi: \left(J^1(Q\times_M E)\right)/K &\to C(Q)\times_M J^1E, \\
    [j^1_x(s, e)]_K &\mapsto ([j^1_x s]_K, j^1_x e).
\end{aligned}
\end{equation}
is a fiber diffeomorphism. This identification is canonical, as it does not require a choice of a principal connection, and it plays a fundamental role in the formulation of Lagrangian field theories on affine principal bundles.

Let \( L\vol \) be a Lagrangian density over \( J^1(Q \times_M E) \) invariant under the action of the subgroup \( K \subset K \ltimes V \), that is, \(
R^{(1)}_g L = L \quad \text{for all } g \in K.
\)
Due to this invariance, the Lagrangian \( L \) descends to a reduced Lagrangian defined on the quotient space,
\[
l:\left(J^1(Q\times_M E)\right)/K = C(Q) \times_M J^1E \to \mathbb{R},
\]
where the identification \eqref{eq: Lidentification} is used. It is possible to formulate a new variational principle for \(l\) considering reduced sections and their corresponding reduced variations. Concretely, given a (local) section \( (s, e): U \to Q \times_M E \) over an open domain \( U \subset M \) and an infinitesimal variation \( (\delta s, \delta e) \in \Gamma(s^* V(Q) \oplus e^* V(E)) \) on it, the corresponding infinitesimal variation for the reduced section $\Psi\circ j^1(s,e)$ is
\[
\Psi_* \left[ j^1(\delta s, \delta e) \right]_K = (\delta \sigma, j^1 \delta e) \in \Gamma(\sigma^* V(C(Q)) \oplus (j^1 e)^* V(J^1 E)),
\]
where, as discussed in \cite[pp.~171--172]{EPFT2001},
\[
\delta \sigma = \nabla^\sigma \eta \in \Gamma(T^*M \otimes \mathrm{ad} Q) = \Gamma(\sigma^* V(C(Q))),
\]
and \( \eta \) is the unique section of the adjoint bundle \( \mathrm{ad} Q \) such that \( \delta s = \eta \circ \mathrm{Im}(s) \). This Lagrangian reduction on affine principal bundles is fully studied in \cite{affinered} and the core results are articulated in the following theorem:

\begin{theorem}[Lagrangian Reduction] \label{Lagrangian reduction}
Let \( L: J^1(Q \times_M E) \to \mathbb{R} \) be an \( K \)-invariant Lagrangian and let \( l: C(Q) \times_M J^1E \to \mathbb{R} \) be the reduced Lagrangian. For an open set \( U \subset M \), with \( \bar{U} \) compact, and a section \( (s, e): \bar{U} \to Q \times_M E \), define \( \sigma: \bar{U} \to C(Q) \) as in the identification \eqref{eq: Lidentification}. Then, the following are equivalent:

\begin{enumerate} [\em (i)\em]
    \item The variational principle
   \(
   \delta \int_U L(j^1 s, j^1 e) \vol = 0
   \)
   holds for vertical variations \( (\delta s, \delta e) \) along \( (s, e) \) with compact support.

   \item The local section \( (s, e) \) satisfies the Euler-Lagrange equations for \( L \).

   \item The variational principle
   \(
   \delta \int_U l(\sigma, j^1 e) \vol = 0
   \)
   holds using variations of the form:
   \[
   \delta \sigma = \nabla^\sigma \eta \in \Gamma(\sigma^* V(C(Q))), \quad \delta e \in \Gamma(e^* V(E)),
   \]
   where \( \eta \) is any section of the adjoint bundle \( \mathrm{ad} Q \) and \( \delta e \) any section of \( e^* V(E) \), both with compact support.

   \item The Lagrange-Poincar\'e equations hold:
\begin{equation} \label{eq: LP_equations}
    \mathcal{EL}(l(\sigma, \ )) = 0, \quad \mathrm{div}^\sigma \frac{\delta l(\  , j^1e)}{\delta \sigma} = 0,
\end{equation}
where \(\mathcal{EL}\) represents the Euler-Lagrange operator of the Lagrangian \( l(\sigma, \ ) : J^1E \to \mathbb{R} \), and 
\(\frac{\delta l( \ ,j^1e)}{\delta \sigma} \in \Gamma(\sigma^*V^*C(Q)) = \Gamma(TM \otimes (\mathrm{ad}Q)^*)\) 
is the vertical differential of \( l( \ , j^1e) : C(Q) \to \mathbb{R} \) restricted to vertical vectors along \( \sigma \). 
The operator \(\mathrm{div}^\sigma\) is defined by:
\begin{equation}
    \langle \mathrm{div}^\sigma D, \eta \rangle = -\langle D, \nabla^\sigma \eta \rangle + \mathrm{div} \langle D, \eta \rangle, 
    \quad \forall \eta \in \Gamma(\mathrm{ad}Q),
\end{equation}
where the bilinear products are the natural ones.
\end{enumerate}
\end{theorem}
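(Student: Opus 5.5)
The plan is to prove the chain (i)$\Leftrightarrow$(ii), (i)$\Leftrightarrow$(iii) and (iii)$\Leftrightarrow$(iv).

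The equivalence (i)$\Leftrightarrow$(ii) is the classical statement that critical points of $\int_U L(j^1 s, j^1 e)\vol$ under compactly supported vertical variations are exactly the solutions of the Euler--Lagrange equations. It follows from the first variation formula, integration by parts, and the fundamental lemma of the calculus of variations. No reduction enters here.

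For (i)$\Leftrightarrow$(iii), I will use the $K$-invariance $L = l\circ\Psi\circ\pi_K$, where $\pi_K$ is the quotient map. This gives
\[
\int_U L(j^1s,j^1e)\vol=\int_U l(\sigma,j^1e)\vol .
\]
Differentiating along a variation $(s_\epsilon,e_\epsilon)$ shows that $\delta\int L$ equals $\delta\int l$ taken along the reduced variation $\Psi_*[j^1(\delta s,\delta e)]_K=(\nabla^\sigma\eta,j^1\delta e)$, with $\delta s=\eta\circ\mathrm{Im}(s)$. The key point is that this correspondence is surjective onto the class of variations allowed in (iii). Since $s$ is a section of a principal bundle, every compactly supported section $\eta$ of $\mathrm{ad}Q$ arises from a unique vertical variation $\delta s$, and the two have the same support; conversely every $\delta s$ has this form. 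The components $\delta e$ are unconstrained on both sides. Hence the two variational principles are equivalent. The one detail to check is that the reduced variation of $j^1e$ really is $j^1\delta e$. This holds because $\Psi$ acts as the identity on the $J^1E$ factor, which is exactly why the identification \eqref{eq: Lidentification} is convenient.

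For (iii)$\Leftrightarrow$(iv), I compute
\[
\delta\int_U l\,\vol=\int_U\Big\langle\frac{\delta l}{\delta\sigma},\nabla^\sigma\eta\Big\rangle\vol+\int_U \frac{d}{d\epsilon}\Big|_{0} l(\sigma,j^1e_\epsilon)\vol .
\]
The second term is the ordinary first variation of the Lagrangian $l(\sigma,\cdot)$ on $J^1E$. Integration by parts rewrites it as $\int_U\langle\mathcal{EL}(l(\sigma,\cdot)),\delta e\rangle\vol$. For the first term I apply the definition of $\mathrm{div}^\sigma$, which gives
\[
-\int_U\langle \mathrm{div}^\sigma\tfrac{\delta l}{\delta\sigma},\eta\rangle\vol+\int_U\mathrm{div}\langle\tfrac{\delta l}{\delta\sigma},\eta\rangle\vol .
\]
The last integral vanishes by the divergence theorem, since $\eta$ has compact support in $U$. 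Because $\eta$ and $\delta e$ are independent and arbitrary, the fundamental lemma yields both equations in \eqref{eq: LP_equations}. The converse direction reverses the same computation.

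I expect the main obstacle to be the bookkeeping in the first term: identifying $\sigma^*V(C(Q))\cong T^*M\otimes\mathrm{ad}Q$, so that $\delta l/\delta\sigma$ pairs with $\nabla^\sigma\eta$ as claimed. The identity $\delta\sigma=\nabla^\sigma\eta$ itself is quoted from \cite{EPFT2001}, so I will take that formula as given.
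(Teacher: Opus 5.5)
Your proposal is correct. It follows the route the paper has in mind: the paper does not prove this theorem itself but quotes it from \cite{affinered}, after setting up exactly your ingredients — the canonical identification $\Psi$, which decouples $\sigma=[j^1s]_K$ from $j^1e$, and the reduced variations $(\nabla^\sigma\eta, j^1\delta e)$ taken from \cite{EPFT2001} — so that (i)$\Leftrightarrow$(iii) is the bijective correspondence $\delta s\leftrightarrow\eta$ you describe and (iii)$\Leftrightarrow$(iv) is the integration by parts through the defining identity of $\mathrm{div}^\sigma$ that you carry out.
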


The main aim of this paper is to obtain an analogous result to Theorem \ref{Lagrangian reduction} for Hamiltonian field theories over affine principal bundles.

\section{Multisymplectic and Polysymplectic bundles} \label{sec: multi}  

The \em dual jet bundle \em $J^1P^* $ of a fiber bundle $\pi_{M,P}:P\rightarrow M$ is a vector bundle over $P$ such that for any $y \in P_x,x\in M$, the fiber is $J^1_yP^*=\mathrm{Aff}(J^1_yP,\bigwedge^nT_x^*M)$, the set of affine maps from the jet bundle to the bundle of $n$-forms on $M$, where $n=\mathrm{dim} M$. Provided a fiber chart $(x^i,y^a)$ on $P$, we define fiber coordinates $(x^i, y^a, p^i_a, \tp)$ on $J^1P^*$ so that its elements have the expression
\begin{equation}
y_i^a\mapsto (\tp+p^i_ay^a_i)d^nx,
\end{equation}
where $d^nx=dx^1\wedge\cdots\wedge dx^n$. Observe that the use of affine morphisms implies that $\mathrm{rank}(J^1P^*)=\mathrm{rank}(J^1P)+1$. 

The multisymplectic space $J^1P^*$ is canonically isomorphic to the subbundle $Z$ of $\bigwedge^nT^*P$ of $n$-forms annihilated when contracted with two vertical vectors \cite{gotay}. This allows to define a \em canonical \em $n$-form $\Theta$ on $J^1P^*\cong Z$ defined as
\begin{equation}
\Theta(z)(u_1,\dots,u_n)=z\left(T\pi_{P,\bigwedge^nT_x^*P}u_1,\dots,T\pi_{P,\bigwedge^nT_x^*P}u_n\right), \quad z\in Z,
\end{equation}
where $u_i\in T_zZ$ for $i=1,\dots,n$, along with the \em canonical multisymplectic \em $(n+1)$-form
\begin{equation}
\Omega=-d\Theta.
\end{equation}
In local coordinates, these forms are expressed as follows:
\begin{equation}
\Theta=p_a^idy^a\wedge d^{n-1}x_i+\tp d^nx,
\end{equation}
\begin{equation}
\Omega=dy^a\wedge dp_a^i \wedge d^{n-1}x_i-d\tp\wedge d^nx.
\end{equation}

A more extensive analysis of the multisymplectic formalism is available in \cite{multi_formalism, multisymplectic, gotay}. Note that the multisymplectic bundle defined in this paper is a multisymplectic manifold. For an  analysis of how to study the dynamics of general multisymplectic manifolds, see \cite{RemarksMultiSymp}.

The \em polysymplectic bundle \em of a fiber bundle $\pi_{M,P}:P\rightarrow M$ is \begin{equation}
\Pi_P=TM\otimes V^*P\otimes\bigwedge^nT^*M.
\end{equation}
It is a vector bundle over $P$ with $\mathrm{rank}(\Pi_P)=\mathrm{rank}(J^1P)$ which is dual to the vector bundle $T^*M\otimes VP$ that models $J^1P$. In fact, the projection of the the affine map $\varphi\in J^1_yP^*=\mathrm{Aff}(J^1_yP,\bigwedge^nT_x^*M)$ into its associated linear map 
$\vec{\varphi}\in (\Pi_P)_y=(T_x^*M\otimes V_yP)^*\otimes\bigwedge^nT_x^*M$, is an affine bundle modeled on the rank-one vector bundle $\bigwedge^nT^*M\to M$ \cite{AffineBracket}. 

A \em Hamiltonian system \em is a pair $(\Pi_P,\delta)$ where $\delta$ is a section of $J^1P^*\to \Pi_P$. Given such section, the canonical form on $J^1P^*$ can be pulled back to $\Pi_P$ as $\Theta_{\delta}=\delta^*\Theta$ and $\Omega_\delta=-d\Theta_{\delta}$ to provide dynamics. Thus, a section $p$ of $\Pi_P\to M$ is a \em solution \em of the Hamiltonian system if for any vertical vector field $X$ on $\Pi_P$,
\begin{equation} \label{Hameq}
0=p^*i_X\Omega_\delta.
\end{equation}

For each $p\in \Pi_{P}$, there exist local coordinates $(x^i, y^a, p^i_a)$  in which $p$ can be expressed as
\begin{equation*}
p=p^i_a\frac{\partial}{\partial x^i}\otimes dy^a \otimes d^nx,
\end{equation*}
the section $\delta(x^i,y^a, p^i_a)=(x^i, y^a,p^i_a, H_{\delta}(x^i,y^a, p^i_a))$, and
\begin{equation}\label{pullback canform}
\Theta_{\delta}=p_a^idy^a\wedge d^{n-1}x_i+H_{\delta}(x^i,y^a, p^i_a)d^nx.
\end{equation}
In these coordinates, given $E:\Pi_P\to \R$, a $G$-invariant real function on $\Pi_P$,
\begin{equation}\label{eq: function inv}
\frac{\partial E}{\partial y^{\alpha}}=\frac{1}{2}p^i_{\gamma}c^{\gamma}_{\beta\alpha}\frac{\partial E}{\partial p^i_\beta}.
\end{equation}
Similarly, for any $G$-equivariant function on $P$, $C: P\to \mathfrak{g}$,
\begin{equation} \label{eq: function equiv}
\frac{\partial C^{\gamma}}{\partial y^{\alpha}}=\frac{1}{2}c^{\gamma}_{\beta\alpha}C^{\beta}.
\end{equation}

An Ehresmann connection $\Lambda$ on $P\to M$ defines a natural section $\delta_{\Lambda}$ of $J^1P^*\to\Pi_P$
\begin{equation*}
\delta_{\Lambda}(v_x\otimes\omega_y \otimes\vol)=(\omega_y\circ \Lambda )\wedge i_{v_x}\vol\in Z_y\cong (J^1P^*)_y, 
\end{equation*}
where $v_x\otimes\omega_y \otimes\vol\in T_xM\otimes V_y^*P\otimes\bigwedge^nT_x^*M$. From \eqref{pullback canform}, $\mathcal{H}=\delta-\delta_{\Lambda}$ is a map $\mathcal{H}:\Pi_P\to\bigwedge^nT^*M$. Hence, any Hamiltonian system can equivalently be described by a triple $(\Pi_P,\Lambda,\mathcal{H})$ and $\mathcal{H}$ is known as a \em Hamiltonian density. \em The local expression of \eqref{Hameq} are the  Hamilton--Cartan equations
\begin{equation} \label{Hamilton-Cartan}
\frac{\partial H}{\partial p^i_a}=\frac{\partial y^a}{\partial x^i}-\Lambda^a_i; \hspace{5mm}
-\frac{\partial H }{\partial y^a}=\frac{\partial p^i_a}{\partial x^i}+\frac{\partial \Lambda^b_i}{\partial y^a}p^i_b,
\end{equation}
where $\Lambda^a_i$ are the coefficients of the horizontal lift defined by $\Lambda$ and $\mathcal{H}=Hd^nx$. These equations can also be obtained in terms of a covariant bracket.

\begin{definition}
A differential form $F$ on $J^1P^*$ is  \textbf{horizontal} if $i_uF=0$ for any vertical tangent vector with respect to $J^1P^*\to M,$ that is, locally,
$$F=F_{i_1,\dots, i_r}dx^{i_1}\wedge\dots\wedge dx^{i_r}.$$
\end{definition}

\begin{definition} 
An horizontal $r$-form on $J^1P^*$ is \textbf{Poisson} if there is a vertical $(n-r)$-multivector field $\chi_F$ on $J^1P^*$ such that
\begin{equation} 
i_{\chi_F}\Omega =dF.
\end{equation}
\end{definition}
Given a Poisson $r$-form $F$ and a Poisson $s$-form $E$, their bracket is the $(r+s+1-n)$-form on $J^1P^*$,
\begin{equation}\label{Poisson bracket}
\{F,E\}=(-1)^{r(s-1)}i_{\chi_E}i_{\chi_F}\Omega,
\end{equation}
which, in turn, is a Poisson form. As seen in \cite{ForgerPoisson, PoissonForms}, this operator is a generalized graded Poisson bracket with a modified Leibniz rule. Poisson $(n-1)$-forms project from $J^1P^*$ to $\Pi_P$ and play a fundamental role in the formulation of Hamilton-Cartan equations \eqref{Hamilton-Cartan} in terms of a bracket, analogous to the role of affine functions in a cotangent bundle in Hamiltonian mechanics. 
\begin{theorem}\emph{\cite[Proposition 5.2]{someremarks}} \label{th: bracket formulation}
A section $p$ of $\Pi_P\to M$ is a solution of a given Hamiltonian system $(\Pi_P,\Lambda,\mathcal{H})$, $\mathcal{H}=H\vol$ if and only if for any horizontal Poisson $(n-1)$-form $F$ the following equation holds true:
\begin{equation} \label{Hameqs with bracket}
\{F,H\}\vol\circ p=d(p^*F)-(d^hF)\circ p,
\end{equation}
where $d^hF$ is the horizontal differential of $F$ with respect to the connection on $\Pi_P$.
\end{theorem}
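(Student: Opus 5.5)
The plan is to reduce the bracket identity to the coordinate Hamilton--Cartan equations \eqref{Hamilton-Cartan} and then use that the family of horizontal Poisson $(n-1)$-forms is large enough to recover each of those equations. The first step is to describe the horizontal Poisson $(n-1)$-forms locally. For $F$ horizontal, $i_{\chi_F}\Omega=dF$ forces the coefficients of $F$ to be affine in the momenta. Writing $\chi_F$ as a vertical vector field and comparing components of $dF$ with those of $i_{\chi_F}\Omega$, using $\Omega=dy^a\wedge dp^i_a\wedge d^{n-1}x_i-d\tp\wedge d^nx$, should give
\[
F=(f^i(x,y)+p^i_a g^a(x,y))\,d^{n-1}x_i
\]
up to a term in $\tp$ that is killed by the dependence requirements. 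The corresponding field is
\[
\chi_F=g^a\partial_{y^a}-\Big(\partial_{y^a}f^i+p^i_b\,\partial_{y^a}g^b\Big)\partial_{p^i_a}+(\dots)\partial_{\tp}.
\]
Such an $F$ projects to $\Pi_P$, which is why its pullback by a section $p$ of $\Pi_P$ makes sense.

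Next I compute each side of \eqref{Hameqs with bracket} for this $F$ and a section $p(x)=(x,y^a(x),p^i_a(x))$. Here the bracket $\{F,H\}$ is understood with $H$ as the $n$-form $\delta$ (equivalently, the function $-\tp$ constrained by $\delta$). Since $\chi_H$ is then a vector field, the bracket reduces to $\chi_H(F)$ contracted against $\vol$, giving
\[
\Big(\frac{\partial H}{\partial p^i_a}\big(\partial_{y^a}f^i+p^i_b\partial_{y^a}g^b\big)-\frac{\partial H}{\partial y^a}g^a\Big)d^nx .
\]
On the right-hand side, $d(p^*F)$ is the total divergence
\[
\Big(\partial_i f^i+\partial_{y^a}f^i\,\partial_i y^a+\partial_i p^i_a\, g^a+p^i_a(\partial_i g^a+\partial_{y^b}g^a\,\partial_i y^b)\Big)d^nx .
\]
The horizontal differential $d^hF$ with respect to the connection induced on $\Pi_P$ by $\Lambda$ replaces $\partial_i y^a$ by $\Lambda^a_i$ and $\partial_i p^i_a$ by the connection coefficients of the dual connection, namely $-p^i_b\partial_{y^a}\Lambda^b_i$. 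The difference $d(p^*F)-d^hF\circ p$ then cancels all explicit $x$-derivatives of the coefficients and leaves
\[
\partial_{y^a}f^i\,(\partial_iy^a-\Lambda^a_i)+p^i_b\partial_{y^a}g^b(\partial_i y^a-\Lambda^a_i)+g^a\big(\partial_ip^i_a+p^i_b\partial_{y^a}\Lambda^b_i\big).
\]

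Finally I compare coefficients. If \eqref{Hamilton-Cartan} holds, substituting it shows both sides agree for every $F$. Conversely, choosing $g=0$ and $f^i$ with $\partial_{y^a}f^i=\delta^i_j\delta_a^b$ locally (that is, $f^i=y^b\delta^i_j$) yields the first family of equations. Choosing $f=0$ and $g^a$ constant then yields the second family. A partition of unity globalizes these local choices, since $F$ only needs to be defined near the image of $p$.

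The main obstacle is the correct identification of $d^hF$: the connection on $\Pi_P$ induced by $\Lambda$ must produce exactly the term $\partial_{y^a}\Lambda^b_i\,p^i_b$ with the right sign. Sign conventions in $\{F,H\}$ (the factor $(-1)^{r(s-1)}$, and treating $H$ as degree $n$) also need care. I would first fix these by checking the simplest case, $F=y^b d^{n-1}x_j$, before running the general computation.
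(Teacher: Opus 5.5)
The paper gives no proof of this statement; it quotes it from \cite[Proposition 5.2]{someremarks}, so there is no in-paper argument to compare with. Judged on its own, your argument is the natural local verification and is correct in substance. It has three ingredients: the normal form $F=(f^i+p^i_ag^a)\,d^{n-1}x_i$ (the local content of Proposition \ref{prop: n-1}, which you rederive), a coordinate computation of both sides, and the test forms $y^b\,d^{n-1}x_j$ and $p^i_c\,d^{n-1}x_i$ for the converse. Your expressions for $\{F,H\}$, $d(p^*F)$ and $d(p^*F)-d^hF\circ p$ are right, modulo the second point below, and they do recover \eqref{Hamilton-Cartan}.

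Several points need repair.

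First, your justification of the bracket is wrong. The multivector field of a Poisson $n$-form has degree $0$, and that of a function has degree $n$. So for $n\ge 2$, $\chi_H$ is not a vector field in either reading, and ``$\chi_H(F)$'' has no meaning. The vector field is $\chi_F$. Up to sign convention, the bracket in the statement is $-i_{\chi_F}dH=-\chi_F(H)$, where $H$ is the density relative to $\delta_\Lambda$, regarded as a function on $\Pi_P$. It is not $\delta$, nor $\tp$ on its image; either of those would make the left side absorb (part of) the terms that the statement keeps separately in $d^hF$. Your displayed formula is exactly $-\chi_F(H)$. It is also how \eqref{eq: Bracket} must be read: the coefficient of $\partial H/\partial y^a$ is $g^a$, not $\sum_i\partial F^i/\partial p^i_a=n\,g^a$.

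Second, the connection \eqref{eq: conn.poly} also involves a linear connection on $M$, which you omit. Its contribution to $d^hF$ is $g^ap^k_a(\Gamma^i_{ik}-\Gamma^i_{ki})$. This vanishes only if the torsion is trace-free (e.g.\ torsion-free), an assumption you should state, since the paper uses it tacitly.

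Third, the statement quantifies over globally defined forms, so ``$F$ only needs to be defined near the image of $p$'' is not the right justification. Instead, multiply your $f^i$ and $g^a$ by a bump function on $P$ equal to $1$ near $(x_0,y(x_0))$. The result is still of the form $\theta_X+\pi^*_{P,\Pi_P}\omega$, hence Poisson, and your computation is unchanged near $x_0$.

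Finally, your plan to fix signs on a test form is well advised. Read literally, $\mathcal H=\delta-\delta_\Lambda$ together with $\Omega_\delta=-d\Theta_\delta$ turns \eqref{Hameq} into \eqref{Hamilton-Cartan} with $H$ replaced by $-H$. The normalization therefore has to be fixed against \eqref{Hamilton-Cartan}.
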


The aforementioned connection on $\Pi_P\to M$ is defined from the Ehresmann connection $\Lambda$ on $P\to M$ and any linear connection on $M$. In local coordinates, the corresponding horizontal lift is
\begin{align} \label{eq: conn.poly}
\frac{\partial}{\partial x^i}\mapsto\frac{\partial}{\partial x^i}+\Lambda^a_i\frac{\partial}{\partial y^a}+\left(-\frac{\partial \Lambda^b_i}{\partial y^a}p^j_b+\Gamma^j_{ik}p^k_a-\Gamma^k_{ik}p^j_a\right)\frac{\partial}{\partial p^j_a},
\end{align}
where $\Gamma^k_{ij}$ are the Christoffel symbols of the linear connection.

Furthermore, the Poisson $(n-1)$-forms have a very particular structure. Let $W$ be any vector bundle over $P$, we introduce a linear operator between vector bundles
\begin{align*}
\llangle\cdot,\cdot\rrangle: \left(TM\otimes W^*\otimes \bigwedge^{n}T^*M\right)\otimes W \to& \bigwedge^{n-1}T^*M \\
\left(\frac{\partial}{\partial x^i}\otimes v\otimes \mathrm{v},w\right)\mapsto& \langle v, w\rangle i_{\partial /\partial x^i}\mathrm{v},
\end{align*}
and for any $\gamma\in\Gamma(W)$, the $(n-1)$-form $\theta_{\gamma}$ is defined on $q\in TM\otimes W^*\otimes \bigwedge^{n}T^*M$ by $(\theta_{\gamma})_q=\llangle q, \gamma \rrangle$.
\begin{proposition}\label{prop: n-1}\emph{\cite[Proposition 4.3]{someremarks}}
Any Poisson $(n-1)$-form on $\Pi_{P}$ can be written as
\begin{equation*}
F=\theta_X+\pi^*_{P,\Pi_P}\omega+\Upsilon,
\end{equation*}
where $X\in\mathfrak{X}^V(P)=\Gamma (VP)$ is a vertical vector field on $P$, $\omega$ is an horizontal $(n-1)$-form on $P$ and $\Upsilon$ is a closed horizontal $(n-1)$-form on $\Pi_P$.
\end{proposition}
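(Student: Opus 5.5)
The plan is to work in adapted local coordinates $(x^i,y^a,p^i_a,\tp)$ on $J^1P^*$. I will write a general horizontal $(n-1)$-form as $F=F^i\, d^{n-1}x_i$, with $F^i$ functions of all the coordinates, and then read off the constraints imposed by the Poisson condition $i_{\chi_F}\Omega=dF$. Since $F$ has degree $n-1$, the multivector $\chi_F$ is a vertical vector field, say
\[
\chi_F=\chi^a\frac{\partial}{\partial y^a}+\chi^i_a\frac{\partial}{\partial p^i_a}+\chi\frac{\partial}{\partial \tp}.
\]
Using the local expression $\Omega=dy^a\wedge dp_a^i \wedge d^{n-1}x_i-d\tp\wedge d^nx$, I will compute $i_{\chi_F}\Omega$ and compare its components with those of $dF=dF^i\wedge d^{n-1}x_i$. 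The two sides are organized as follows:
\begin{itemize}
\item $i_{\chi_F}\Omega$ contains no $dp\wedge dp$, $dy\wedge dy$ or $d\tp\wedge dp$ terms. Its $dp^i_a\wedge d^{n-1}x_i$ part has coefficient $\chi^a$, its $dy^a\wedge d^{n-1}x_i$ part has coefficient $-\chi^i_a$, and there is a $d^nx$ part involving $\chi$.
\item $dF$ contains the terms $\frac{\partial F^i}{\partial p^j_a}dp^j_a\wedge d^{n-1}x_i$, together with the analogous $y$, $\tp$ and $x$ derivatives.
\end{itemize}

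Matching coefficients gives the key constraints. The $dp^j_a\wedge d^{n-1}x_i$ terms force $\frac{\partial F^i}{\partial p^j_a}=\delta^i_j\,\chi^a$ with $\chi^a$ independent of $i$. The $d\tp\wedge d^{n-1}x_i$ terms force $\partial F^i/\partial\tp=0$. From these, $F^i$ is affine in $p$:
\[
F^i=p^i_aX^a(x,y,\tp)+f^i(x,y,\tp),
\]
where $X^a$ is independent of $p$. Cross-differentiating the relations expressing $\chi^a$ and $\chi^i_a$ in terms of $F$ then shows that $X^a$ and $f^i$ do not depend on $\tp$ either. Hence $p^i_aX^a\,d^{n-1}x_i$ is exactly $\theta_X$ for the vertical vector field $X=X^a\partial/\partial y^a$ on $P$, and $f^i\,d^{n-1}x_i$ is the pullback $\pi^*_{P,\Pi_P}\omega$ of a horizontal $(n-1)$-form on $P$. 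Globally, $X$ will be defined intrinsically as the fiber derivative of $F$ in the momentum directions, using the duality between $\Pi_P$ and $T^*M\otimes VP$, so that the splitting does not depend on the chart.

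The remaining freedom, which I expect to be the main obstacle, is a horizontal form $\Upsilon$ whose coefficients may still depend on $x$ in a way not captured above. These are the terms that the coefficient matching leaves untouched because they only produce $dx\wedge d^{n-1}x$ components, and such components are absorbed by the $d\tp\wedge d^nx$ part through $\chi$. Here one must be careful about what the Poisson condition actually forces. The $d^nx$ component of $dF$ is $\partial_iF^i\,d^nx$, and it must match $-\chi\, d^nx$ up to sign. Because $\chi$ is free, this component imposes no constraint: any dependence of $f^i$ on $x$ is allowed and is already contained in $\pi^*\omega$.

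Accordingly, I would define $\Upsilon=F-\theta_X-\pi^*_{P,\Pi_P}\omega$. What has to be shown is that this difference is closed as a horizontal form, or can be absorbed into $\omega$. One clean way is to observe that any horizontal form on $\Pi_P$ depending only on $x$ is the pullback of a form on $M$, and hence can be absorbed into $\pi^*\omega$. This suggests that $\Upsilon$ is really only needed to account for the non-uniqueness of the decomposition. Closedness of $\Upsilon$ should then follow from $i_{\chi_\Upsilon}\Omega=d\Upsilon$ with $\chi_\Upsilon$ having only a $\partial/\partial\tp$ component, matched against the precise sign conventions of \cite{someremarks}. Finally, I would check that the local pieces $X$, $\omega$ and $\Upsilon$ glue under changes of fibered coordinates, using the transformation rules of $p^i_a$ and the intrinsic description of $\theta_\gamma$ through $\llangle\cdot,\cdot\rrangle$.
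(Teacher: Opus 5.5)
Your coordinate strategy is the right one: matching coefficients of $i_{\chi_F}\Omega$ and $dF$ is how the cited result is obtained. The constraints you extract are also correct, namely $\partial F^i/\partial p^j_a=\delta^i_j\chi^a$ with $\chi^a$ independent of $i$, and $\partial F^i/\partial\tp=0$. However, the step that carries the whole content of the proposition, ``from these, $F^i$ is affine in $p$'', is asserted rather than proved. The cross-differentiation you mention is also aimed at the wrong target. Independence of $\tp$ is immediate from $\partial F^i/\partial\tp=0$; it is affineness that needs cross-differentiation. For $j\neq i$ one has $\partial\chi^a/\partial p^j_b=\frac{\partial}{\partial p^i_a}\bigl(\partial F^i/\partial p^j_b\bigr)=0$. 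Since every $j$ admits some $i\neq j$, it follows that $\chi^a=X^a(x,y)$, and only then does $F^i-p^i_aX^a$ have vanishing momentum derivatives. This uses $n\geq 2$ essentially. For $n=1$ the constraint is vacuous: every $\tp$-independent function, such as $(p^1_1)^2$, is Poisson, and the statement fails. Your proof has to invoke $n\geq 2$ at exactly this point.

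Second, the paragraph on $\Upsilon$ is muddled and contains a false claim. Set $\omega=f^i\,d^{n-1}x_i$. Your own computation then gives $F=\theta_X+\pi^*_{P,\Pi_P}\omega$ exactly, so $\Upsilon=F-\theta_X-\pi^*_{P,\Pi_P}\omega=0$ and there is no remaining freedom to control. In fact the $\Upsilon$ term in the statement is redundant: a closed horizontal $(n-1)$-form has coefficients depending only on $x$, so it is already a pullback from $M$. The mechanism you suggest for closedness would not work. You propose that closedness follows from $i_{\chi_\Upsilon}\Omega=d\Upsilon$ with $\chi_\Upsilon$ a multiple of $\partial/\partial\tp$. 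But every horizontal form $\Upsilon^i(x)\,d^{n-1}x_i$ is Poisson, with $\chi_\Upsilon=-(\partial_i\Upsilon^i)\,\partial/\partial\tp$, whether or not it is closed. Drop that argument and take $\Upsilon=0$. Globally, once $X$ is defined intrinsically as the fiber derivative, $F-\theta_X$ is a horizontal form whose coefficients are constant along the connected fibers of $J^1P^*\to P$. It is therefore basic, which yields $\omega$ without a separate gluing argument.
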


\begin{remark}

The $(n-1)$-Poisson forms, as well as their structure given in the previous Proposition, match with the notion of current in \cite{AffineBracket}. The bracket and the Poisson equation in that work provide an alternative way of defining the dynamical equations that does not require to define a Hamiltonian density and horizontal derivatives with respect to a connection. The reduction analysis with this approach is an interesting matter to be carried out in the future.
\end{remark}

\section{Covariant Bracket on Affine Principal Bundles} \label{sec: covariant bracket}

Let $P=\left(Q \times_{M} E\right)$ be an affine principal bundle and consider the fiber coordinates $(x^i,y^\alpha)$ on $Q\to M$ and $(x^i,z^A)$ on $E\to M$. As stated in Section \ref{sec: multi}, this induces local coordinates $(x^i,y^\alpha,z^A,p^i_{\alpha},p^i_A, \tilde{p})$ on $J^1P^*$ and the local expression of the canonical $n+1$-form is:

\begin{equation}
    \Omega=dy^{\alpha} \wedge dp^{i}_\alpha \wedge d^{n-1} x_{i}+ dz^{A} \wedge dp^{i}_A \wedge d^{n-1} x_{i}-d \tilde{p} \wedge d^{n}x
\end{equation}

Let $\Lambda$ be a $G$-principal bundle on $P$ and $\mathcal{H}=Hd^nx$ be a Hamiltonian density on $P$, the Hamilton--Cartan equations are:

\begin{align}
\frac{\partial H}{\partial p^i_{\alpha}} &=\frac{\partial y^{\alpha}}{\partial x^{i}}-\Lambda_{i}^{\alpha}, \label{eq: HC12}  \qquad
\frac{\partial H}{\partial p^i_A} = \frac{\partial z^{A}}{\partial x^{i}}-\Lambda_{i}^{A},\\
\frac{\partial H}{\partial y^{\alpha}} &= -\left(\frac{\partial p^i_{\alpha}}{\partial x^{i}}+p^i_B\frac{\partial \Lambda_i^B}{\partial y^{\alpha}}+p^i_{\beta}\frac{\partial \Lambda_i^{\beta}}{\partial y^{\alpha}}\right), \label{eq: HCthird} \\
\frac{\partial H}{\partial z^A} &= -\left(\frac{\partial p^i_A}{\partial x^{i}}+p^i_B\frac{\partial \Lambda_i^B}{\partial z^A}+p^i_{\beta}\frac{\partial \Lambda_i^{\beta}}{\partial z^A}\right). \label{eq: HCFourth}
\end{align}

As stated in Theorem \ref{th: bracket formulation} and Proposition \ref{prop: n-1}, these equations can be equivalently obtained using Poisson $(n-1)$-forms, which are of the kind
\begin{equation}
F=\theta_X+\theta_Y+\pi^*_{P,\Pi_P}\omega+\Upsilon,
\end{equation}
where $X\in\mathfrak{X}^V(Q)=\Gamma (VQ)$ is a vertical vector field on $Q$, $Y\in\mathfrak{X}^V(E)=\Gamma (E)$, $\omega$ is an horizontal $(n-1)$-form on $P$ and $\Upsilon$ is a closed horizontal $(n-1)$-form on $\Pi_P$. The corresponding bracket is given by
\begin{equation} \label{eq: Bracket}
    \{F, H\}=\frac{\partial F^{i}}{\partial y^{\alpha}} \frac{\partial H}{\partial p_{\alpha}^i}+\frac{\partial F^{i}}{\partial z^{A}} \frac{\partial H}{\partial p_{A}^{i}}-\frac{\partial F^{i}}{\partial p_{\alpha}^{i}} \frac{\partial H}{\partial y^{\alpha}}-\frac{\partial F^{i}}{\partial p_{A}^{i}} \frac{\partial H}{\partial z^{A}}.
\end{equation}

\section{Reduction} \label{sec: reduction}
The main goal of this Section is to present the reduction of the Poisson equations \eqref{th: bracket formulation} when the Hamiltonian density on an affine principal bundle is invariant under the group $K$. The results represents the Poisson picture of the reduction developed in \cite{affinered} and constitute the core results of this paper.

\begin{proposition}\label{prop: red space} Let $P=Q\times_M E$ be an affine principal bundle, the reduced multisymplectic space $\Pi_{P}/ K$ is canonically isomorphic to
    \begin{equation} \label{eq: Hidentification}
        \Pi_{P}/ K\cong\left(T M \otimes \kk^{*} \otimes \bigwedge^{n} T^*M\right) \oplus \Pi_{E}
    \end{equation}
\end{proposition}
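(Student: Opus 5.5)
The identification comes from the product structure $P=Q\times_M E$, which splits the vertical bundle as $V_{(u,e)}P = V_uQ\oplus V_eE$. Dualizing gives $V^*_{(u,e)}P\cong V^*_uQ\oplus V^*_eE$, and tensoring with $TM$ and $\bigwedge^nT^*M$ gives a canonical splitting
\[
(\Pi_P)_{(u,e)}\cong \left(T_xM\otimes V^*_uQ\otimes \textstyle\bigwedge^n T^*_xM\right)\oplus \left(T_xM\otimes V^*_eE\otimes \textstyle\bigwedge^n T^*_xM\right),
\]
that is, $\Pi_P\cong \mathrm{pr}_Q^*\Pi_Q\oplus \mathrm{pr}_E^*\Pi_E$ as vector bundles over $P$. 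No connection enters here. I will then pass to the quotient by $K$ in each summand and use \eqref{eq: someremarks} for the $Q$-part.

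First, I check how $K$ acts. The subgroup $K\subset G$ acts by $(u,e)\mapsto (R_gu,e)$, so it acts only on the $Q$ factor and trivially on $E$. The lifted action on $\Pi_P$ is $(v\otimes\omega\otimes\vol)\mapsto v\otimes(\omega\circ TR_g^{-1})\otimes\vol$. This preserves the splitting, acts on $V^*Q$ by the cotangent-lift of $R_g$, and is the identity on $V^*E$.

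Second, I define the map
\[
[(u,e),\,p_Q\oplus p_E]_K\ \mapsto\ \big([u,p_Q]_K,\ (e,p_E)\big).
\]
The first component lies in $\Pi_Q/K$, which \eqref{eq: someremarks} identifies canonically with $TM\otimes\kk^*\otimes\bigwedge^nT^*M$. Concretely, $\alpha\in V^*_uQ$ maps to the element of $\kk^*_x$ given by $[u,\xi]_K\mapsto \alpha(\xi_Q(u))$. This is well defined because the infinitesimal generators satisfy $T R_g(\xi_Q(u))=(\mathrm{Ad}_{g^{-1}}\xi)_Q(ug)$, and it is invariant under $K$. The second component lies in $\Pi_E$ and is well defined because $K$ fixes $e$ and $p_E$.

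Third, I verify that the map is a fiber diffeomorphism over $M$. For injectivity: if two classes have the same image, their $E$-data coincide. The $Q$-data then agree up to some $g$, since $\Pi_Q\to\Pi_Q/K$ has $K$-orbits as fibers; this $g$ also relates the full pairs because $K$ acts trivially on the $E$-part. Surjectivity holds because any $u$ over $x$ can be paired with any $e\in E_x$. Smoothness in both directions follows from local trivializations of $Q$. In coordinates $(x^i,y^\alpha,z^A,p^i_\alpha,p^i_A)$ with $Q|_U\cong U\times K$, the map is simply $p^i_\alpha \mapsto$ the right-trivialized components in $\mathfrak{k}^*$, while $(z^A,p^i_A)$ are kept unchanged.

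The main obstacle I expect is justifying canonicity and well-definedness on the $Q$-part, namely that $V^*Q/K\cong\kk^*$ uses only the vertical bundle, with no connection. Unlike $T^*Q/K$, where a connection is needed to split off $T^*M$, here only vertical covectors appear. I would argue this directly from $VQ\cong Q\times\mathfrak{k}$ through the fundamental vector fields, which is canonical. This is exactly why the $\Pi_{P/K}$-type term in \eqref{eq: Poisson-Poincare} collapses here: the quotient $P/K$ is $E$, and $E$ is a genuine factor of $P$ rather than a base that needs to be lifted horizontally.
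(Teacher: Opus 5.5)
Your proposal is correct and follows the paper's argument. You split $V^*P = V^*Q\oplus V^*E$ via the product structure, note that $K$ acts trivially on the $E$-part and on the $TM$ and $\bigwedge^n T^*M$ factors, and use the canonical identification $V^*Q/K\cong\kk^*$ coming from the fundamental vector fields; you additionally supply details the paper omits, namely the explicit map on orbits, the $K$-invariance check via $TR_g(\xi_Q(u))=(\mathrm{Ad}_{g^{-1}}\xi)_Q(ug)$, bijectivity, and smoothness.
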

\begin{proof}
The $K$-action on $Q$, induces a natural action on $VQ\subset TQ$. Furthermore, the quotient is isomorphic to the adjoint bundle, $VQ/K\cong\kk$. Since the action on $E$ is trivial,
 \[
 V^{*}(Q \times E) / K=\left(V^{*} Q / K\right) \oplus\left(V^{*} E / K\right) =\kk^{*} \oplus V^{*} E.
 \]   
 The result follows as the action of $K$ on $\Pi_{P}$ is trivial on the factors $TM$ and $\bigwedge^{n} T^*M$.
\end{proof}

We denote by $\kappa:\Pi_{P}\rightarrow\Pi_{P}/K\cong(T M \otimes \kk^{*} \otimes \bigwedge^{n} T^*M)\oplus\Pi_E$ the natural projection introduced in Proposition \ref{prop: red space}. In addition, $\mu^i_\alpha$ and $\pi^i_A$ respectively denote multimomenta in $T M \otimes \kk^{*} \otimes \bigwedge^{n} T^*M)$ and $\Pi_E$.

\begin{definition} \label{def: affine poisson}
   An $(n-1)$-form $f$ on $\Pi_P/K$ is called \textbf{affine Poisson} if
   \begin{equation*}
        f=\theta_{\bar{\xi}}+\theta_Y+\pi^*_{M,\Pi_P/K}\omega+\Upsilon,
    \end{equation*}
    where $\xi\in\Gamma (\kk)$, $Y\in\Gamma (E)$, $\omega$ is an horizontal $(n-1)$-form on $M$, and $\Upsilon$ is a closed horizontal $(n-1)$-form on $\Pi_P/K$.
\end{definition}

\begin{lemma} \label{red. forms}
    The projection of an $K$-invariant Poisson $(n-1)$-form $F$ on $\Pi_P$ is an affine Poisson $(n-1)$-form $f$ on $\Pi_P/K$.
\end{lemma}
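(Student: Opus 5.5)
We start from the structure of Poisson $(n-1)$-forms on $\Pi_P$ given in Section \ref{sec: covariant bracket}: by Proposition \ref{prop: n-1} applied to $P=Q\times_M E$,
\[
F=\theta_X+\theta_Y+\pi^*_{P,\Pi_P}\omega+\Upsilon ,
\]
with $X\in\Gamma(VQ)$, $Y\in\Gamma(VE)$, $\omega$ a horizontal $(n-1)$-form on $P$ and $\Upsilon$ a closed horizontal $(n-1)$-form on $\Pi_P$. The plan is to show that $K$-invariance of $F$ forces each piece to be $K$-invariant (or to be regrouped into invariant pieces). We then identify the projection of each invariant piece through $\kappa$ with the corresponding term in Definition \ref{def: affine poisson}.

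The first step is to separate the pieces by their degree of dependence on the multimomenta. In coordinates, $F=(p^i_\alpha X^\alpha+p^i_AY^A+\omega^i+\Upsilon^i)\,d^{n-1}x_i$. Here $\theta_X+\theta_Y$ is linear homogeneous in $p$, and $\pi^*\omega$ is independent of $p$. The term $\Upsilon$ may depend on $p$, but it is closed and horizontal. Because the $K$-action on $\Pi_P$ is by vector bundle automorphisms over the action on $P$, covering the identity on $M$, pulling back by $R_g$ preserves each type: linear $\theta$-forms, basic forms, and closed horizontal forms.

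A subtlety is that the decomposition is not unique, since a closed part of $\pi^*\omega$ or of the linear part could be absorbed into $\Upsilon$. To get around this, I would first average, or argue directly, by differentiating $F$ fiberwise along the linear fibers of $\Pi_P\to P$ at the zero section. The vertical fiber derivative of $F$ at $0$ recovers a well-defined linear part $\theta_{X'}+\theta_{Y'}$, possibly absorbing a linear closed part of $\Upsilon$ into $X',Y'$. Its invariance gives $R_{g*}X'=X'$ and $Y'\circ R_g=Y'$. The remainder $F-\theta_{X'}-\theta_{Y'}$ is invariant; I would write it as $\pi^*\omega+\Upsilon'$ with $\omega$ taken as its restriction to the zero section, which is invariant. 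The closed horizontal piece $\Upsilon'$ is then invariant as well.

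Next, we descend each invariant piece. A $K$-invariant vertical field $X$ on $Q$ is exactly a section $\bar\xi$ of $VQ/K\cong\kk$, that is, $\xi\in\Gamma(\kk)$. Since $K$ acts trivially on $E$, $K$-invariance of $Y$, viewed on $Q\times_ME$, means that $Y$ is independent of the $Q$-factor, so $Y\in\Gamma(VE)$, which is $\Gamma(E)$ in the affine picture. Using the pairing in Proposition \ref{prop: red space}, we get $\llangle \kappa(q),(\bar\xi,Y)\rrangle=\llangle q,X+Y\rrangle$, so $\theta_X+\theta_Y=\kappa^*(\theta_{\bar\xi}+\theta_Y)$.

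The step I expect to be the main obstacle is the $\omega$-term. The definition requires $\omega$ to be a form on $M$, yet invariance only makes $\omega$ a $K$-invariant horizontal form on $P$, so it descends only to $Q/K\times_M E=M\times_M E=E$. I would try to absorb the dependence on $E$: write $\omega$, viewed on $\Pi_P/K$, as (something from $M$) plus a closed horizontal remainder put into $\Upsilon$. Where that fails, the fallback is to use the Poisson condition on $F$: $dF$ must equal $i_{\chi_F}\Omega$, and a fiber-dependent $\omega$ should only enter compatibly with the $\theta$-terms. If even this is insufficient, I would accept the descended form as a horizontal form on $E$, pulled back, as the natural reading of $\pi^*_{M,\Pi_P/K}\omega$. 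Finally, an invariant closed horizontal $\Upsilon$ descends to a closed horizontal form on $\Pi_P/K$, because $\kappa$ is a surjective submersion commuting with $d$. This gives $F=\kappa^*f$ with $f$ affine Poisson.
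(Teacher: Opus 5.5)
Your route is the paper's. Its proof is the one-line assertion that, by Proposition \ref{prop: n-1}, a $K$-invariant Poisson $(n-1)$-form is $\theta_X+\theta_Y+\pi^*_{P,\Pi_P}\omega+\Upsilon$ with every piece invariant. So $X$ becomes a section of $VQ/K\cong\kk$, $Y$ a field on $E$, $\omega$ a form on $M$, and $\Upsilon$ stays closed. Your treatment of the $\theta$-terms through the pairing, and of $\Upsilon$, is that argument written out.

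Your non-uniqueness worry is unnecessary. For a horizontal $\Upsilon=\Upsilon^i\,d^{n-1}x_i$, closedness forces $\partial\Upsilon^i/\partial z^A$, $\partial\Upsilon^i/\partial y^\alpha$ and all momentum derivatives of $\Upsilon^i$ to vanish, since they are coefficients of the independent forms $dz^A\wedge d^{n-1}x_i$, $dy^\alpha\wedge d^{n-1}x_i,\dots$ in $d\Upsilon$. So $\Upsilon$ is pulled back from a closed form on $M$. Hence $F$ splits canonically into a part linear in the momenta and a momentum-independent part. Each part is $K$-invariant because $K$ acts by vector bundle automorphisms of $\Pi_P\to P$, so no fiber differentiation or averaging is needed.

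The step you flag for $\omega$ is a genuine obstruction, and it is exactly what the paper's proof asserts without justification. Neither of your first two remedies can work:
\begin{enumerate}[(a)]
\item Absorbing the $z$-dependence into $\Upsilon$ is impossible, by the computation above.
\item The Poisson condition gives nothing. For every horizontal $\omega=\omega^i\,d^{n-1}x_i$ on $P$, the vertical field $\chi=-\frac{\partial\omega^i}{\partial y^\alpha}\frac{\partial}{\partial p^i_\alpha}-\frac{\partial\omega^i}{\partial z^A}\frac{\partial}{\partial p^i_A}-\frac{\partial\omega^i}{\partial x^i}\frac{\partial}{\partial\tilde p}$ satisfies $i_\chi\Omega=d(\pi^*_{P,\Pi_P}\omega)$.
\end{enumerate}
Concretely, let $\phi$ be a function on $E$ that is not constant on the fibers, and set $F=\pi^*_{P,\Pi_P}(\phi\,d^{n-1}x_1)$. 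It is Poisson and $K$-invariant, since $K$ acts trivially on $E$. Its projection is independent of the momenta, so in any decomposition as in Definition \ref{def: affine poisson} the $\theta$-terms vanish. Then $\phi\,d^{n-1}x_1-\pi^*_{M,\Pi_P/K}\omega$ would have to be closed, forcing $\partial\phi/\partial z^A=0$.

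Thus, read literally with $\omega$ on $M$, the lemma is false. Your option (c) is the correct statement rather than a fallback: $\omega$ is a horizontal $(n-1)$-form on $E$, pulled back along $\Pi_P/K\to E$. It is also consistent with the later computations, where $\partial f^i/\partial z^A$ is left arbitrary.

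The same caveat applies to the linear part once $X$ is allowed to be a general vertical field on $P$, as your fiber-derivative step implicitly does. Its $VQ$-component may then depend on $z$, e.g. $\phi\,\tilde\xi$ with $\tilde\xi$ an invariant vertical field on $Q$. It then descends to a map $E\to\kk$ over $M$, not to a section of $\kk$. It is a genuine $\xi\in\Gamma(\kk)$ only under the ansatz $X\in\Gamma(VQ)$ of Section \ref{sec: covariant bracket}.
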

\begin{proof}
    From Proposition \ref{prop: n-1}, any $K$-invariant Poisson $(n-1)$-form on $\Pi_P$ is of the kind
    \begin{equation*}
        F=\theta_X+\theta_Y+\pi^*_{P,\Pi_P}\omega+\Upsilon,
    \end{equation*}
    where $X\in\Gamma (VQ/K)\cong\Gamma(\kk^*)$, $Y\in\Gamma (E)$, $\omega\in\Omega^{n-1}(M)$, and $Z$ is a closed $K$-invariant $(n-1)$-form on $\Pi_P$.
\end{proof}

%No he definido Hamiltonian density on the reduced space.

\begin{definition}
Let $f$ be an affine Poisson $(n-1)$-form on $\Pi_P/K$ and $h$ a Hamiltonian density on $\Pi_{P}/ K\cong(T M \otimes \kk^{*} \otimes \bigwedge^{n} T^*M) \oplus \Pi_{E}$, their bracket is
\begin{equation} \label{eq: Reduced Bracket}
\{f,h\}=\{\bar{\xi},h\}_{\mathrm{LP}}+ \{f,h\}_{E},
\end{equation}
where $\{f,h\}_{E}$ is the Poisson bracket on $\Pi_E$, and 
\begin{equation} \label{eq: Lie-Poisson bracket}
\{\bar{\xi},h\}_{\mathrm{LP}}=-\left\langle\mu,\left[\bar{\xi},\frac{\delta h}{\delta\mu}\right]\right\rangle
\end{equation}
is the Lie-Poisson bracket on $T M \otimes \kk^{*} \otimes \bigwedge^{n} T^*M.$
\end{definition}
 
\begin{proposition}\label{kappa.is.poisson}
Let $\kappa:\Pi_{P}\rightarrow\Pi_{P}/K\cong(T M \otimes \kk^{*} \otimes \bigwedge^{n} T^*M)\oplus\Pi_E$ be the natural projection in Proposition \ref{prop: red space}, for any $K$-invariant Poisson $(n-1)$-form $F$ and any $K$-invariant Poisson function $H$ on $\Pi_P/K$
\begin{equation}
\{F,H\}=\kappa^*\{f,h\},
\end{equation}
where the bracket on the left hand side is defined by \eqref{Poisson bracket}, $f$ is an affine Poisson $(n-1)$-form on $\Pi_{P}/K$ such that $\kappa^*f=F$, $h$ is a function such that $\kappa^*h=H$, and the bracket on the right hand side is defined by \eqref{eq: Reduced Bracket}.
\end{proposition}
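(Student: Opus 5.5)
We will verify the identity $\{F,H\}=\kappa^*\{f,h\}$ in the local coordinates $(x^i,y^\alpha,z^A,p^i_\alpha,p^i_A)$ of $\Pi_P$, starting from the explicit bracket \eqref{eq: Bracket}. By Lemma \ref{red. forms}, $F=\theta_X+\theta_Y+\pi^*_{P,\Pi_P}\omega+\Upsilon$, with $X$ a $K$-invariant vertical field on $Q$ (equivalently a section $\bar\xi$ of $\kk$), $Y\in\Gamma(E)$, and $\omega,\Upsilon$ $K$-invariant. Since \eqref{eq: Bracket} is linear in $F$, we treat the four summands separately. The forms $\omega$ and $\Upsilon$ have no $p$-dependence in the relevant components; $\Upsilon$ is closed horizontal. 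Both should contribute to $\{F,H\}$ exactly as their projections contribute to $\{f,h\}_E$ (contributing zero, or equally on both sides). The $\theta_Y$ part only involves the $z^A,p^i_A$ derivatives, on which $K$ acts trivially and $\kappa$ is the identity onto $\Pi_E$. Its bracket term $\frac{\partial F^i}{\partial z^A}\frac{\partial H}{\partial p^i_A}-\frac{\partial F^i}{\partial p^i_A}\frac{\partial H}{\partial z^A}$ is therefore literally $\kappa^*\{f,h\}_E$, once we note that $\partial H/\partial z^A$ and $\partial H/\partial p^i_A$ equal the pullbacks of $\partial h/\partial z^A$ and $\partial h/\partial \pi^i_A$.

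The real work is the $\theta_X$ term. Write $F^i=p^i_\alpha X^\alpha(x,y)$, so the bracket reduces to $\frac{\partial X^\alpha}{\partial y^\beta}p^i_\alpha\frac{\partial H}{\partial p^i_\beta}-X^\alpha\frac{\partial H}{\partial y^\alpha}$. We choose local coordinates adapted to a local trivialization $Q|_U\cong U\times K$, with $y^\alpha$ coordinates on $K$ near the identity in which the structure constants $c^\gamma_{\alpha\beta}$ appear. Now $K$-invariance of $X$ corresponds to $K$-equivariance of the associated map $C:Q\to\mathfrak{k}$ representing $\bar\xi$. Using \eqref{eq: function equiv}, we replace $\partial X^\alpha/\partial y^\beta$ by $\tfrac12 c^\alpha_{\gamma\beta}X^\gamma$. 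Using \eqref{eq: function inv}, we replace $\partial H/\partial y^\alpha$ by $\tfrac12 p^i_\gamma c^\gamma_{\beta\alpha}\partial H/\partial p^i_\beta$. Both terms then become multiples of $p^i_\gamma c^\gamma_{\cdot\cdot}X^\cdot\,\partial H/\partial p^i_\cdot$. By antisymmetry of $c$, the two halves combine into the single expression $-p^i_\gamma c^\gamma_{\alpha\beta}X^\alpha\,\partial H/\partial p^i_\beta$, after checking signs and index order.

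At points of the identity section, $\kappa$ identifies $p^i_\alpha$ with $\mu^i_\alpha$ and $\partial H/\partial p^i_\beta$ with $(\delta h/\delta\mu)^\beta$. The expression above is then exactly $-\langle\mu,[\bar\xi,\delta h/\delta\mu]\rangle=\{\bar\xi,h\}_{\mathrm{LP}}$ from \eqref{eq: Lie-Poisson bracket}. Both sides are $K$-invariant functions, so equality at the slice extends to all of $\Pi_P$ by invariance. This gives $\{F,H\}=\kappa^*(\{\bar\xi,h\}_{\mathrm{LP}}+\{f,h\}_E)$.

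The main obstacle is the bookkeeping in this last step. First, the factors $\tfrac12$ in \eqref{eq: function inv}–\eqref{eq: function equiv} must add up to a full Lie bracket rather than cancel. Second, the sign convention of the bracket in $\kk$ must match the sign in \eqref{eq: Lie-Poisson bracket}. I would first test the computation with $K$ abelian, where both sides vanish, and then fix signs by checking a single left-invariant generator. A secondary point is that $\theta_X$ may have $p^i_B$-mixing if $X$ depends on $z$; this contributes cross terms $\frac{\partial F^i}{\partial z^A}$. These must be absorbed into $\{f,h\}_E$ by treating $\bar\xi$'s $z$-dependence there.
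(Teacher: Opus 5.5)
Your route is the paper's: compute \eqref{eq: Bracket} in coordinates, trade the $y$-derivatives for structure-constant terms via \eqref{eq: function inv}--\eqref{eq: function equiv}, merge the two halves by antisymmetry, and let the $(z^A,p^i_A)$ part pass unchanged to $\Pi_E$. Splitting $F$ into summands, and applying \eqref{eq: function equiv} to $X^\alpha$ rather than \eqref{eq: function inv} to $F^i=p^i_\alpha X^\alpha$, are cosmetic differences. Making explicit that you evaluate on the slice $y=0$ and extend by $K$-invariance is a genuine improvement: both identities hold only there, in exponential coordinates on $K$, and the paper leaves this implicit.

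The step that fails is the sign you assert for the merged term. Substituting the two identities exactly as stated gives
\[
p^i_\alpha\frac{\partial X^\alpha}{\partial y^\beta}\frac{\partial H}{\partial p^i_\beta}-X^\alpha\frac{\partial H}{\partial y^\alpha}
=\tfrac12\, p^i_\gamma c^\gamma_{\alpha\beta}X^\alpha\frac{\partial H}{\partial p^i_\beta}-\tfrac12\, p^i_\gamma c^\gamma_{\beta\alpha}X^\alpha\frac{\partial H}{\partial p^i_\beta}
=+\,p^i_\gamma c^\gamma_{\alpha\beta}X^\alpha\frac{\partial H}{\partial p^i_\beta}.
\]
So the halves do add, but with the opposite sign to the one you claim. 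With the pointwise bracket $[\xi,\eta]^\gamma=c^\gamma_{\alpha\beta}\xi^\alpha\eta^\beta$ on $\kk$, this is $+\langle\mu,[\bar\xi,\delta h/\delta\mu]\rangle=-\{\bar\xi,h\}_{\mathrm{LP}}$, which is the $+$ Lie--Poisson bracket one expects when reducing by a right action.

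The statement, with the sign in \eqref{eq: Lie-Poisson bracket}, holds only if $\kk$ carries the bracket transported from $K$-invariant vertical vector fields through $VQ/K\cong\kk$, the identification used in Proposition \ref{prop: red space}. By \eqref{eq: function equiv}, that bracket is $[X_1,X_2]^\gamma=-c^\gamma_{\alpha\beta}X_1^\alpha X_2^\beta$ at $y=0$. With it, your expression equals $-\langle\mu,[\bar\xi,\delta h/\delta\mu]\rangle$, as required. This is a convention you must state explicitly; testing a single generator will not uncover it as an error. The paper's own proof reaches the minus sign only because both $y$-terms in its first formula for $r$ carry signs opposite to those dictated by \eqref{eq: Bracket} and \eqref{eq: function inv}.

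Two smaller points. First, the factor $\tfrac12$ is correct for the coordinate components $X^\alpha$ of an invariant vertical field; it is exactly what \eqref{eq: function inv} gives for $F^i=p^i_\alpha X^\alpha$. By contrast, a map $C$ with $C(ug)=\mathrm{Ad}_{g^{-1}}C(u)$ satisfies $\partial C^\gamma/\partial y^\alpha=c^\gamma_{\beta\alpha}C^\beta$ at $y=0$, with no $\tfrac12$. So justify the substitution by invariance of $F^i$, not by equivariance of $C$. Second, your $z$-dependence worry is moot. The hypothesis $\kappa^*f=F$ with $f$ affine Poisson ($\xi\in\Gamma(\kk)$, $\omega$ a form on $M$) forces $X$ and $\omega$ to be independent of $z$. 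A closed horizontal $\Upsilon$ depends on $x$ only, so neither contributes to either bracket.
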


\begin{proof}
As $F$ and $H$ are $K$-invariant, applying Equation \eqref{eq: function inv} on the expression \eqref{eq: Bracket} provides a function $r$ on $\Pi_{P}/K$,
$$
r= -\frac{1}{2} \mu^j_{\gamma} c^{\gamma}_{\beta \alpha} \frac{\partial f^i}{\partial \mu^j_\beta} \cdot \frac{\partial h}{\partial \mu_\alpha^i}+\frac{\partial f^i}{\partial \mu^i_\alpha}\cdot\frac{1}{2} \mu^j
_{\gamma} c^{\gamma}_{\beta \alpha} \frac{\partial h}{\partial \mu_\beta^j}+\frac{\partial f^i}{\partial z^A} \cdot \frac{\partial h}{\partial \pi^{i}_A}-\frac{\partial f^i}{\partial \pi^i_A} \cdot \frac{\partial h}{\partial z^A}
$$
such that $\{F,H\}=\kappa^*r$. We shall see that $r=\{f,h\}$ as desired. Since $f$ is the projection of $F$, from Lemma \ref{red. forms}, $f$ is an affine Poisson form and $\partial f^{i}/\partial \mu^j_{\alpha}=\xi^{\alpha} \delta_{j}^{i}$. Therefore,
\begin{align*}
    -\frac{1}{2} \mu^j_{\gamma} c^{\gamma}_{\beta \alpha} \frac{\partial f^i}{\partial \mu^j_\beta} \frac{\partial h}{\partial \mu_\alpha^i} 
    &+ \frac{\partial f^i}{\partial \mu^i_\alpha} \frac{1}{2} \mu^j_{\gamma} c^{\gamma}_{\beta \alpha} \frac{\partial h}{\partial \mu_\beta^j} \\
    &= \frac{1}{2} \mu_{\gamma}^{j} c_{\beta \alpha}^{\gamma} \left(\xi^{\alpha} \frac{\partial h}{\partial \mu_{\beta}^{j}} - \xi^{\beta} \frac{\partial h}{\partial \mu^{j}_\alpha} \right) 
    = \mu_{\gamma}^{j} c_{\beta \alpha}^{\gamma} \xi^{\alpha} \frac{\partial h}{\partial \mu^{j}_\beta},
\end{align*}

%\begin{multline*}
%    -\frac{1}{2} \mu^j_{\gamma} c^{\gamma}_{\beta \alpha} \frac{\partial f^i}{\partial \mu^j_\beta} \frac{\partial h}{\partial \mu_\alpha^i} 
%    + \frac{\partial f^i}{\partial \mu^i_\alpha} \frac{1}{2} \mu^j_{\gamma} c^{\gamma}_{\beta \alpha} \frac{\partial h}{\partial \mu_\beta^j} \\
%    = \frac{1}{2} \mu_{\gamma}^{j} c_{\beta \alpha}^{\gamma} \left(\xi^{\alpha} \frac{\partial h}{\partial \mu_{\beta}^{j}} - \xi^{\beta} \frac{\partial h}{\partial \mu^{j}_\alpha} \right) 
%    = \mu_{\gamma}^{j} c_{\beta \alpha}^{\gamma} \xi^{\alpha} \frac{\partial h}{\partial \mu^{j}_\beta},
%\end{multline*}
and 
\begin{equation} \label{eq: local bracket}
    r=-\mu_{\gamma}^{j} c_{\alpha\beta }^{\gamma} \xi^{\alpha} \frac{\partial h}{\partial \mu^{j}_\beta} +\frac{\partial f^i}{\partial z^A} \cdot \frac{\partial h}{\partial \pi^{i}_A}-\frac{\partial f^i}{\partial \pi^i_A} \cdot \frac{\partial h}{\partial z^A},
\end{equation}
which is the local expression of $\{f,h\}$.
\end{proof}

\begin{proposition} \label{dh reduction}
Let $F$ be a $K$-invariant Poisson $(n-1)$-form on $\Pi_P$ and $f$ the affine Poisson $(n-1)$-form on $\Pi_P/K$ such that $\kappa^*f=F$, then
\begin{equation}
    d^hF= \kappa^*\left(d^hf\right),
\end{equation}
where $d^hF$ and $d^hf$ are the horizontal differentials with respect to the connection on $\Pi
_P$ and $\Pi_E$ introduced in \eqref{eq: conn.poly}, as well as the induced connection on $T M \otimes \kk^{*} \otimes \bigwedge^{n} T^*M$.
\end{proposition}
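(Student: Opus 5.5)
The plan is to show that, once the connection on $\Pi_P$ is chosen $K$-invariant, the horizontal lift \eqref{eq: conn.poly} is $K$-equivariant and $T\kappa$-related to the horizontal lift on the reduced bundle. Then $d^hF$ and $d^hf$ are computed from the same horizontal data, and the identity follows from $\kappa^*f=F$.

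\textbf{Choice of connection.} Take $\Lambda$ to be a principal connection on $Q$ paired with a linear connection $\nabla^E$ on $E$; the one on $E$ is the connection induced on the associated bundle, or any linear connection if one wants to avoid choices. Then $\Lambda$ is a $K$-invariant Ehresmann connection on $P=Q\times_M E$. In coordinates adapted to a local trivialization of $Q$, the $y$-part of $\Lambda$ is $\Lambda^\alpha_i=-A^\beta_i(x)\,\rho^\alpha_\beta(y)$, where $\rho_\beta$ are the infinitesimal generators. The $z$-part $\Lambda^A_i=-\Gamma^A_{iB}z^B$ does not depend on $y$, so the mixed terms $\partial\Lambda^B_i/\partial y^\alpha$ in \eqref{eq: HCthird} vanish. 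Together with the linear connection on $M$ (same Christoffel symbols $\Gamma^k_{ij}$ downstairs), this lift on $\Pi_P$ is $K$-invariant, because every ingredient is. Hence it projects to a connection on $\Pi_P/K$.

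\textbf{Identifying the projected connection.} The projected connection should split according to Proposition \ref{prop: red space}:
\begin{itemize}
\item on the $\Pi_E$ summand it is exactly \eqref{eq: conn.poly} for $(E,\nabla^E)$;
\item on $TM\otimes\kk^*\otimes\bigwedge^nT^*M$ it is the tensor product of the dual of the induced adjoint connection on $\kk$ with the connections on $TM$ and $\bigwedge^nT^*M$.
\end{itemize}
To check the first summand, note that the $(z,p_A)$ components of the lift only involve $\Lambda^A_i$ and $\Gamma$, so they coincide with the $\Pi_E$ lift. To check the second summand, write $\mu^i_\alpha$ as the $K$-invariant combination of $p^i_\beta$ and the equivariant frame. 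Then differentiate along the horizontal lift: the term $-\partial_{y}\Lambda^\beta_i\,p^j_\beta$ together with \eqref{eq: function equiv} should produce the coadjoint term $-A^\beta_i c^\gamma_{\beta\alpha}\mu^j_\gamma$, while the $\Gamma$-terms pass through unchanged.

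\textbf{Conclusion.} Write $F=\theta_X+\theta_Y+\pi^*\omega+\Upsilon$ as in Lemma \ref{red. forms}. By definition, $d^hF$ evaluated on $(\partial_{x^i})$ is computed by applying horizontal lifts to the components of $F$. Since $\kappa$ intertwines the lifts and $F=\kappa^*f$, we get
\[
(d^hF)_p=(d^hf)_{\kappa(p)}\circ T\kappa ,
\]
that is, $d^hF=\kappa^*(d^hf)$. An alternative way to organize this step is term by term:
\begin{itemize}
\item $d^h\theta_X=\theta_{\nabla X}$-type formulas reduce to $\theta_{\nabla\bar\xi}$;
\item the $\theta_Y$ term reduces to the corresponding term on $\Pi_E$;
\item for $\pi^*\omega$ and $\Upsilon$ the horizontal differential agrees with the ordinary differential, so $\kappa^*$ commutes with it.
\end{itemize}

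\textbf{Main obstacle.} The hardest step is the second-summand computation: showing that the vertical $p$-component of the lift descends to the coadjoint covariant derivative. This needs care with the factor-$\tfrac12$ conventions in \eqref{eq: function inv}–\eqref{eq: function equiv}. I would first do it in a local trivialization at the identity section, where $\rho^\alpha_\beta=\delta^\alpha_\beta$, and then use equivariance to extend.
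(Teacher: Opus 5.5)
Your argument is sound, and it takes a different route from the paper. The paper works entirely in coordinates. It writes out $d^hF$ as $\sum_i\mathrm{hor}(\partial_{x^i})(F^i)\,d^nx$ and uses the infinitesimal invariance identities \eqref{eq: function inv}--\eqref{eq: function equiv} twice:
\begin{itemize}
\item on $\Lambda$, to get $\partial_{y^\alpha}\Lambda^B_i=0$ and $\partial_{y^\alpha}\Lambda^\gamma_i=\tfrac12 c^\gamma_{\beta\alpha}\Lambda^\beta_i$;
\item on $F$, to trade $y$-derivatives for $\mu$-derivatives.
\end{itemize}
It then reads off the resulting expression $q$ in reduced variables as $d^hf$.

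You instead use that the lift \eqref{eq: conn.poly} is built naturally from $\Lambda$ and the connection on $M$. It is therefore $K$-equivariant and $\kappa$-related to a projected connection. Hence $d^hF=\kappa^*(d^hf)$ holds for every $K$-invariant horizontal form, and the decomposition of Lemma~\ref{red. forms} is not even needed. The only real content is identifying the projected connection. Your route explains why the identity holds. The paper's route directly produces the explicit coefficient $-\mu^j_\gamma c^\gamma_{\beta\alpha}\Lambda^\beta_i$ used later to obtain \eqref{eq: RHC MU}, which you would still have to compute.

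For your ``main obstacle'' you can avoid the $\tfrac12$ bookkeeping entirely. The flow $\phi_t$ of the $K$-invariant horizontal field of the $Q$-part of $\Lambda$ is $K$-equivariant, so its linearization sends $\xi_Q(u)$ to $\xi_Q(\phi_t(u))$. The induced map on $V^*Q$ therefore preserves $\langle p,\xi_Q\rangle$. That is exactly the transport $[u,\nu]\mapsto[\phi_t(u),\nu]$ in $Q\times_K\mathfrak{k}^*$, i.e.\ the coadjoint connection.

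One point to fix: you may not \emph{choose} $\Lambda$. The horizontal differentials are taken with respect to the connection of the given Hamiltonian system, and in the paper $\Lambda$ is a $G$-principal connection. Your argument actually uses three properties: $K$-invariance of $\Lambda$, $\partial_{y^\alpha}\Lambda^A_i=0$, and $\partial_{z^A}\Lambda^\beta_i=0$. These must be derived from the hypothesis, not imposed.
\begin{itemize}
\item $K$-invariance already forces $\partial_{y^\alpha}\Lambda^A_i=0$.
\item Invariance under the translations by $V$ forces $\partial_{z^A}\Lambda^\beta_i=0$. It also makes $\Lambda^A_i$ affine in $z$, with linear part induced by the $Q$-part.
\end{itemize}
So the $E$-part of a $G$-principal connection is in general an affine, not a linear, connection. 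This is harmless, since nothing in your argument uses linearity.

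If one assumes only $K$-invariance, as the paper's computation does, then $\partial_{z^A}\Lambda^\beta_i$ need not vanish, and the projected connection does not split as you claim. Its $\pi$-component picks up $-\partial_{z^A}\Lambda^\beta_i\,\mu^j_\beta$, which is the cross term the paper keeps in $q$. Your $\kappa$-relatedness argument still gives $d^hF=\kappa^*(d^hf)$ in that case, provided $d^hf$ is taken with respect to the projected connection rather than a direct sum.
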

\begin{proof}
The local expression of $d^hF$ is
$$
\begin{aligned}
   \frac{\partial F^i}{\partial x^i}+\frac{\partial F^i}{\partial y^{\alpha}} \Lambda_i^{\alpha}+\frac{\partial F^i}{\partial z^A} \Lambda_i^A
+\frac{\partial F^i}{\partial p_\alpha^j}\left(-\frac{\partial \Lambda^\beta_i}{\partial y^\alpha} p_\beta^j-\frac{\partial \Lambda_i^B}{\partial y^\alpha}p^j_B+\Gamma_{i k}^j p_\alpha^k-\Gamma_{i k}^k p_\alpha^j\right)\phantom{.}&\\
+\frac{\partial F^i}{\partial p_A^j}\left(-\frac{\partial \Lambda_i^B}{\partial z^A}p^j_B-\frac{\partial \Lambda_i^\beta}{\partial z^A} p_\beta^j+\Gamma_{i k}^j p_A^k-\Gamma_{i k}^k p_A^j\right).& 
\end{aligned}
$$
Since $\Lambda$ is $K$-invariant, a direct application of Equations \eqref{eq: function inv} and \eqref{eq: function equiv} implies that,
\begin{equation*}
\frac{\partial \Lambda^B_i}{\partial y^{\alpha}}=0, \qquad
\frac{\partial \Lambda^{\gamma}_i}{\partial y^{\alpha}}=\frac{1}{2}c^{\gamma}_{\beta\alpha}\Lambda^{\beta}_i.
\end{equation*}
Then, $d^hF=\kappa^*q$, where 

\begin{align}
   q &= \frac{\partial f^i}{\partial x^i} + \frac{\partial f^i}{\partial z^A} \Lambda_i^A 
   + \frac{1}{2} \mu^j_{\gamma} c^{\gamma}_{\beta\alpha} \frac{\partial f^i}{\partial \mu^j_{\beta}} \Lambda_i^{\alpha} \notag \\
   &\quad + \frac{\partial f^i}{\partial \mu_\alpha^j} 
   \left(-\frac{1}{2} c^{\beta}_{\gamma\alpha} \Lambda_i^\gamma \mu_\beta^j 
   + \Gamma_{i k}^j \pi_\alpha^k - \Gamma_{i k}^k \pi_\alpha^j \right) \notag \\
   &\quad + \frac{\partial f^i}{\partial \pi_A^j} 
   \left(-\frac{\partial \Lambda_i^B}{\partial z^A} \pi^j_B 
   - \frac{\partial \Lambda_i^\beta}{\partial z^A} \mu_\beta^j 
   + \Gamma_{i k}^j \pi_A^k - \Gamma_{i k}^k \pi_A^j \right) \notag \\
   &= \frac{\partial f^i}{\partial x^i} + \frac{\partial f^i}{\partial z^A} \Lambda_i^A \notag \\
   &\quad + \frac{\partial f^i}{\partial \mu_\alpha^j} 
   \left(-\mu_\gamma^j c^{\gamma}_{\beta\alpha} \Lambda_i^\beta 
   + \Gamma_{i k}^j \pi_\alpha^k - \Gamma_{i k}^k \pi_\alpha^j \right) \notag \\
   &\quad + \frac{\partial f^i}{\partial \pi_A^j} 
   \left(-\frac{\partial \Lambda_i^B}{\partial z^A} \pi^j_B 
   - \frac{\partial \Lambda_i^\beta}{\partial z^A} \mu_\beta^j 
   + \Gamma_{i k}^j \pi_A^k - \Gamma_{i k}^k \pi_A^j \right),\notag
\end{align}
and the local expression of $d^hf$ is identified.
\end{proof}

\begin{theorem} \label{reduction}
Let $P = Q \times_M E \to M$ be the affine principal bundle defined by $Q\to M$ and a representation of $K$ on $V$. Let $\vol$ be a volume form on $M$, $\Lambda$ a $G=K \ltimes V$ principal connection on $P\to M$ and $\mathcal{H}=H\vol$ a $K$-invariant Hamiltonian density on $\Pi_P$. Denote by $\mathcal{h}=h\vol$ the reduced Hamiltonian density and for any section $p$ of $\Pi_P\to M$, let $\mu\oplus\pi=\kappa \circ p$ be the reduced section of
$$\Pi_P/K\cong\left(TM\otimes\g^*\otimes\bigwedge^{n-1}T^*M\right)\oplus\Pi_E\to M.$$
Then, the following are equivalent:
\begin{enumerate}[\em (i)\em]
\item for every Poisson $(n-1)$-form $F$ on $\Pi_P$, the following identity holds true:
\begin{equation*}
\{F,H\}\vol=d(F\circ p)-d^{h}F\circ p.
\end{equation*}
\item the section $p$ satisfies the Hamilton--de Donder equations,
\item for every affine Poisson $(n-1)$-form $f$ on $\Pi_P/K$,
\begin{equation}\label{intrinsic.red.eq}
\{f,h\}\vol=d(f\circ(\mu\oplus\pi))-d^{h}f\circ(\mu\oplus\pi
),
\end{equation}
where the bracket is defined by Equation \eqref{eq: Reduced Bracket}.
\item the section $\mu$ satisfies the Lie--Poisson equations
\begin{equation}\label{eq: Lie-Poisson}
  \mathrm{div}^{\Lambda}\mu =\mathrm{ad}^*_{\partial h /\partial\mu}\mu,
\end{equation}
and $\pi$ satisfies the Hamilton--de Donder equations.
\end{enumerate}
\end{theorem}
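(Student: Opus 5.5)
We prove the chain (i)$\Leftrightarrow$(ii)$\Leftrightarrow$(iii)$\Leftrightarrow$(iv). The first equivalence is Theorem \ref{th: bracket formulation} applied to the Hamiltonian system $(\Pi_P,\Lambda,\mathcal{H})$, since Proposition \ref{prop: n-1} describes all Poisson $(n-1)$-forms on $\Pi_P$. The real content lies in (i)$\Leftrightarrow$(iii) and (iii)$\Leftrightarrow$(iv).

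For (i)$\Rightarrow$(iii), take an affine Poisson $(n-1)$-form $f$ on $\Pi_P/K$ and set $F=\kappa^*f$. This is a $K$-invariant Poisson $(n-1)$-form on $\Pi_P$: locally $\theta_{\bar\xi}$ lifts to $\theta_X$, where $X$ is the $K$-invariant vertical field on $Q$ corresponding to $\xi\in\Gamma(\kk)$ via $VQ/K\cong\kk$, and the remaining terms are pullbacks. Proposition \ref{kappa.is.poisson} gives $\{F,H\}=\kappa^*\{f,h\}$, and Proposition \ref{dh reduction} gives $d^hF=\kappa^*d^hf$. Evaluating along $p$ and using $\kappa\circ p=\mu\oplus\pi$ together with $F\circ p=f\circ(\mu\oplus\pi)$ turns identity (i) for $F$ into \eqref{intrinsic.red.eq} for $f$.

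The converse (iii)$\Rightarrow$(i) is the main obstacle, because (i) is required for \emph{all} Poisson forms, not only $K$-invariant ones. I would avoid a direct argument and instead use the local nature of the Hamilton--Cartan equations. The identity in (i) is $C^\infty(M)$-linear in the vertical field $X$ up to terms that cancel. So it suffices to test it on $F=\theta_X+\theta_Y+\pi^*\omega$ with $X$ ranging over a local frame of $VQ$. A local frame of $K$-invariant fields (fundamental fields of a local trivialization, i.e.\ right-invariant generators) spans $VQ$ pointwise over $C^\infty$. Multiplying an invariant $F$ by a function $\phi$ on $M$ changes both sides of (i) by the same term $d\phi\wedge F$. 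Hence the invariant forms suffice to recover \eqref{eq: HC12}--\eqref{eq: HCFourth}. I expect this density and $C^\infty(M)$-linearity check to need the most care, for two reasons: functions on $P$ rather than on $M$ are not allowed, and the invariance identities \eqref{eq: function inv}, \eqref{eq: function equiv} must be used to express $\partial H/\partial y^\alpha$ through $\partial H/\partial p$.

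For (iii)$\Leftrightarrow$(iv), we specialize \eqref{intrinsic.red.eq} to generators, using the local form \eqref{eq: local bracket} of the bracket and the expression of $d^hf$ from the proof of Proposition \ref{dh reduction}. Taking $f=\theta_Y+\pi^*\omega$ with $\xi=0$ eliminates the Lie--Poisson term. What remains is exactly the bracket identity on $\Pi_E$ with connection induced by $\Lambda$, so Theorem \ref{th: bracket formulation} on $E$ gives the Hamilton--de Donder equations for $\pi$, with the $\mu$-coupling term $-\partial\Lambda^\beta_i/\partial z^A\,\mu^j_\beta$ absorbed as in the proof of Proposition \ref{dh reduction}. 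Taking $f=\theta_{\bar\xi}$ instead, the left side becomes $-\langle\mu,[\xi,\partial h/\partial\mu]\rangle\vol$. The right side becomes $d\langle\mu,\xi\rangle$ minus the horizontal term, and that horizontal term is $\langle\mu,\nabla^\Lambda\xi\rangle$ by the computation of $q$ in Proposition \ref{dh reduction} (the $c^\gamma_{\beta\alpha}\Lambda^\beta_i$ term). By the defining formula of $\mathrm{div}^\Lambda$ in Theorem \ref{Lagrangian reduction}, the right side equals $\langle\mathrm{div}^\Lambda\mu,\xi\rangle$. Since $\xi$ is arbitrary, this yields \eqref{eq: Lie-Poisson}, up to the sign convention for $\mathrm{ad}^*$. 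Conversely, (iv) gives \eqref{intrinsic.red.eq} for these generators, and linearity extends it to all affine Poisson forms; the closed term $\Upsilon$ contributes nothing to either side.
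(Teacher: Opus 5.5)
Your treatment of (i)$\Leftrightarrow$(ii), of (i)$\Rightarrow$(iii) and of (iii)$\Leftrightarrow$(iv) matches the paper. For (i)$\Rightarrow$(iii) you pull $f$ back to $F=\kappa^*f$, apply Propositions \ref{kappa.is.poisson} and \ref{dh reduction}, and use $F\circ p=f\circ(\mu\oplus\pi)$. For (iii)$\Leftrightarrow$(iv), testing separately on $\theta_Y+\pi^*\omega$ and on $\theta_{\bar\xi}$ and then using the defining identity of $\mathrm{div}^\Lambda$ is an intrinsic version of the paper's comparison of \eqref{eq: right side} with \eqref{eq: local bracket}.

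The gap is your argument for (iii)$\Rightarrow$(i), exactly where you flagged it. The identity in (i) is indeed linear over $C^\infty(M)$. However, $K$-invariant vertical fields span $VQ$ only with coefficients $a(x,y)$ that depend on the fibre coordinates of $Q$, and the identity is not linear over such functions. Replacing $F$ by $aF$ multiplies both sides by $a$, but it adds $F^i\,\partial_\beta a\,\partial H/\partial p^i_\beta$ to the left-hand side and $F^i\,\partial_\beta a\,(\partial_i y^\beta-\Lambda^\beta_i)$ to the right-hand side. The discrepancy is exactly the first equation of \eqref{eq: HC12}. For the same reason, that equation can only be read off from $\pi^*\omega$ with $\omega$ depending on $y$, and such an $\omega$ is not $K$-invariant. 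Invariant forms give you only three things: the $z$-part of \eqref{eq: HC12}, equation \eqref{eq: HCFourth}, and the single combination of the $y$-part of \eqref{eq: HC12} with \eqref{eq: HCthird} that makes up the Lie--Poisson equation (note that $\partial_i\mu^i$ involves both $\partial_i p$ and $\partial_i y$). So your claim that they recover \eqref{eq: HC12}--\eqref{eq: HCFourth} is false.

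No other argument can close this step either, because (iii) and (iv) only see $\kappa\circ p$. If $p$ is a solution and $g\colon M\to K$ is non-constant, the section $x\mapsto g(x)\cdot p(x)$ has the same reduced section. It nevertheless violates the reconstruction equation $\partial_i y^\alpha-\Lambda^\alpha_i=\partial H/\partial p^i_\alpha$ wherever $dg\neq 0$.

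A minimal instance: take $n=1$, $K=\R$ acting by translations, $V=0$, coordinates $(t,y,p_y)$, $\Lambda$ trivial and $H=\tfrac12 p_y^2$. The section $y\equiv 0$, $p_y\equiv 1$ has $\mu\equiv 1$, which satisfies (iii) and (iv). Yet $\dot y\neq\partial H/\partial p_y$, and (i) fails for $F=y$.

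The paper's proof never meets this obstruction because it only uses forms of the type $F=\kappa^*f$. What it actually proves is that (iii) and (iv) are equivalent to (i) restricted to $K$-invariant Poisson forms. To reach (ii) from the reduced data, you must add the reconstruction condition above as an extra hypothesis rather than try to derive it. It is the Hamiltonian counterpart of the constraint $\sigma=[j^1 s]_K$ built into Theorem \ref{Lagrangian reduction}.
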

\begin{proof}
The equivalence (i)$\Leftrightarrow$(ii) is stated in Theorem \ref{th: bracket formulation}. We shall now prove the equivalence (i)$\Leftrightarrow$(iii). From Propositions \ref{kappa.is.poisson} and \ref{dh reduction}, as every affine Poisson $(n-1)$-form $f$ can be obtained by reduction of a Poisson $(n-1)$-form $F$ on $\Pi_P$, we only need to prove that  $d(F\circ p)=\kappa^*(d(f\circ(\mu\oplus\pi)))$. Indeed, since $F$ is horizontal, for any $v_1,\dots v_{n-1}$ vectors in $T_xM$
\begin{align*}
(p^*F)_x(v_1,\dots,v_{n-1})&=F_{p(x)}(T_xp(v_1),\dots,T_xp(v_{n-1}))=F_{p(x)}(v_1,\dots,v_{n-1})\\&=(F\circ p)_x(v_1,\dots,v_{n-1}).
\end{align*}
Similarly, $(\mu\oplus\pi)^*f=f\circ(\mu\oplus\pi)$. Thus,
$$F\circ p=p^*F=p^*\kappa^*f=(\mu\oplus\pi)^*f=f\circ(\mu\oplus\pi),$$
and $d(F\circ p)=d(f\circ(\mu\oplus\pi))$.

Finally, equivalence (iii)$\Leftrightarrow$(iv) is obtained as follows:

\begin{align*}
d(f\circ(\mu\oplus&\pi))-d^{h}f\circ(\mu\oplus\pi
)\\
    =&\left(\frac{\partial f^{i}}{\partial x^i}
    +\frac{\partial f^{i}}{\partial z^{A}}\frac{\partial z^{A}}{\partial x^{i}}
    +\frac{\partial f^{i}}{\partial \mu^j_{\alpha}}\frac{\partial \mu^j_{\alpha}}{\partial x^{i}}
    +\frac{\partial f^{i}}{\partial \pi^j_{A}}\frac{\partial \pi^j_{A}}{\partial x^{i}} \right.\\
    &-\frac{\partial f^{i}}{\partial x^i}
    -\frac{\partial f^i}{\partial z^A} \Lambda_i^A
    -\frac{\partial f^i}{\partial \mu_\alpha^j} 
    \left(-\mu_\gamma^j c^{\gamma}_{\beta\alpha} \Lambda_i^\beta 
    + \Gamma_{i k}^j \pi_\alpha^k - \Gamma_{i k}^k \pi_\alpha^j \right)\\
    &\left. - \frac{\partial f^i}{\partial \pi_A^j} 
   \left(-\frac{\partial \Lambda_i^B}{\partial z^A} \pi^j_B 
   - \frac{\partial \Lambda_i^\beta}{\partial z^A} \mu_\beta^j 
   + \Gamma_{i k}^j \pi_A^k - \Gamma_{i k}^k \pi_A^j \right)\right)\vol\\
   =&\left(\frac{\partial f^{i}}{\partial z^{A}} 
    \left(\frac{\partial z^{A}}{\partial x^{i}} - \Lambda_i^A \right)
    +\frac{\partial f^{i}}{\partial \mu^j_{\alpha}} 
    \left(\frac{\partial \mu^j_{\alpha}}{\partial x^{i}} 
    +\mu_\gamma^j c^{\gamma}_{\beta\alpha} \Lambda_i^\beta 
    - \Gamma_{i k}^j \pi_\alpha^k + \Gamma_{i k}^k \pi_\alpha^j \right)\right.\\
    &\left.+\frac{\partial f^{i}}{\partial \pi^j_{A}} 
    \left(\frac{\partial \pi^j_{A}}{\partial x^{i}} 
    +\frac{\partial \Lambda_i^B}{\partial z^A} \pi^j_B 
    +\frac{\partial \Lambda_i^\beta}{\partial z^A} \mu_\beta^j 
    - \Gamma_{i k}^j \pi_A^k + \Gamma_{i k}^k \pi_A^j \right)\right)\vol.
\end{align*}
As $f$ is an affine Poisson $(n-1)$-form, $ \partial f^{i}/\partial \mu_{\alpha}=\xi^{\alpha} \delta_{j}^{i}$; 
$\partial f^{i}/\partial \pi_{A}^{j}=Y^{A} \delta_{j}^{i}$,

\begin{align} \label{eq: right side}
d(f\circ(\mu\oplus\pi))-d^{h}f\circ(\mu\oplus\pi) 
&= \frac{\partial f^{i}}{\partial z^{A}}\left(\frac{\partial z^{A}}{\partial x^{i}} -\Lambda_{i}^{A}\right)  
+ \xi^{\alpha} \left(\frac{\partial \mu_{\alpha}^{i}}{\partial x^{i}}-\mu_{\gamma}^{i} c_{\beta \alpha}^{\gamma} \Lambda_{i}^{\beta}\right) \notag \\
&\quad + Y^{A} \left(\frac{\partial \pi^{i}_A}{\partial x^{i}}-\frac{\partial \Lambda_{i}^{B}}{\partial z^{A}} \pi_{B}^{i}-\frac{\partial \Lambda_{i}^{\beta}}{\partial z^{A}} \mu_{\beta}^{i}\right).
\end{align}
Comparison of Equation \eqref{eq: right side} with the local expression of $\{f,h\}$ in \eqref{eq: local bracket} provides the reduced equations:
\begin{align}
    \frac{\partial h}{\partial \pi^{i}_A} 
    &= \frac{\partial z^{A}}{\partial x^{i}} - \Lambda_{i}^{A}, \label{eq: RHC PI}
    \\
    \frac{\partial h}{\partial z^{A}} 
    &= -\left( 
        \frac{\partial \pi_{A}^{i}}{\partial x^{i}} 
        - \frac{\partial \Lambda_{i}^{B}}{\partial z^{A}} \pi_{B}^{i} 
        - \frac{\partial \Lambda_{i}^{\beta}}{\partial z^{A}} \mu_{\beta}^{i}
    \right),
    \label{eq: RHC Z}
    \\
    \mu_{\gamma}^{j} c_{\beta \alpha}^{\gamma} 
    \frac{\partial h}{\partial \mu^{i} _{\beta}} 
    &= \frac{\partial \mu_{\alpha}^{i}}{\partial x^{i}} 
    + \mu_{\gamma}^{i} c_{\beta \alpha}^{\gamma} \Lambda_{i}^{\beta}.
    \label{eq: RHC MU}
\end{align}
\end{proof}

\section{Application to molecular strands} \label{sec: example}
The reduction procedure developed in the previous Sections provides a fundamental framework for analyzing $G$-strands, introduced in \cite{matrixgstrands,gstrands}, when $G$ is a semi-direct product. In this Section, we illustrate this methodology to study Molecular Strands, a physically relevant example comprehensively discussed in \cite{MolStrand}.

To describe this system, we consider a trivial principal bundle \( Q = \mathbb{R}^2 \times SO(3) \to \mathbb{R}^2 \)  with structure group \( K = SO(3) \). The vector bundle \( V = \mathbb{R}^3 \) naturally acts on \( V \), leading to the associated bundle \( E = \mathbb{R}^2 \times \mathbb{R}^3 \to \mathbb{R}^2 \). The resulting affine principal bundle is \( P = \mathbb{R}^2 \times SE(3) \to \mathbb{R}^2 \) with structure group \( SE(3) = SO(3) \ltimes \mathbb{R}^3 \), the group of Euclidean affine isometries of \( \mathbb{R}^3 \). Within this setup, we can take local coordinates \( (s,t) \) in \( \mathbb{R}^2 \) (\( s \) for space and \( t \) for time), identify sections of $Q$ with maps \( R: \mathbb{R}^2 \to SO(3) \) and sections of $E$ with mappings \( \rho: \mathbb{R}^2 \to \mathbb{R}^3 \). We can also identify \( \kk=\mathfrak{so}(3) \) with \( \mathbb{R}^3 \) in the standard way:

\[
\begin{pmatrix}
0 & c & b \\
-c & 0 & a \\
-b & -a & 0
\end{pmatrix}
\mapsto (a, b, c),
\]
and the Lie bracket with the cross product, \( \times \), in \( \mathbb{R}^3 \). Then, from Proposition \ref{prop: red space} we know that,
\[
\Pi_P/K\cong\left(T M \otimes \R^3 \otimes \bigwedge^{n} T^*M\right) \oplus \Pi_{E}
\]
and multimomenta can be described by $\R^3$-valued $1$-forms on $\R^2$:
\[
\mu = \frac{\partial}{\partial s} \otimes \mu^s \otimes d s \wedge d t + \frac{\partial}{\partial t} \otimes \mu^t \otimes d s \wedge d t = \mu^s \otimes d t -\mu^t \otimes ds 
\]
\[
\pi = \frac{\partial}{\partial s} \otimes \pi^s \otimes d s \wedge d t + \frac{\partial}{\partial t} \otimes \pi^t \otimes d s \wedge d t = \pi^s \otimes d t -\pi^t \otimes ds.
\]

Consider that the plane $\R^2$ is equipped with the Minkowski metric $g=ds\otimes ds-v^2 dt\times dt$. We introduce the Hamiltonian $h:\left(T M \otimes \R^3 \otimes \bigwedge^{n} T^*M\right) \oplus \Pi_{E}\to \R$ defined by:
\begin{align}
h\left(\mu^s, \mu^t, \rho, \pi^s, \pi^t\right) &= \frac{1}{2} \left\langle \pi^s, \pi^s \right\rangle 
- \frac{v^2}{2} \left\langle \pi^t, \pi^t \right\rangle 
+ U(\|\rho\|) \notag \\
&\quad + \frac{1}{2} \left\langle \mu^t - \rho \times \pi^t, I^{-1} \left(\mu^t - \rho \times \pi^t \right) \right\rangle \notag \\
&\quad - \frac{1}{2} \left\langle \mu^s - \rho \times \pi^s, J^{-1} \left(\mu^s - \rho \times \pi^s \right) \right\rangle.
\end{align}
This represents a minimal coupling of the Hamiltonian of a string in a radial potential $U(\|\rho\|)$ and the $SO(3)$ chiral model studied in \cite{gstrands}. This system was discussed in \cite{affinered} from a Lagrangian point of view. A direct calculation shows that:

\[
\frac{\delta h}{\delta \mu^s}=-J^{-1}\left(\mu^s-\rho \times \pi^s\right),
\quad
\frac{\delta h}{\delta \mu^t} = I^{-1} \left( \mu^t - \rho \times \pi^t \right),
\]

\[
\frac{\delta h}{\delta \pi^s} = \pi^s - \rho \times \left( J^{-1} \left( \mu^s - \rho \times \pi^s \right) \right),
\quad 
\frac{\delta h}{\delta \pi^t} = -v^2 \pi^t + \rho \times \left( I^{-1} \left( \mu^t - \rho \times \pi^t \right) \right)
\]

\[
\frac{\delta h}{\delta \rho} = \pi^s \times \left( J^{-1} \left( \mu^s - \rho \times \pi^s \right) \right) - \pi^t \times \left( I^{-1} \left( \mu^t - \rho \times \pi^t \right) \right) + \frac{\partial U}{\partial \|\rho\| }\frac{\rho}{\|\rho\|}
\]
%\[
%\frac{\delta h}{\delta \mu} = \left( \frac{\partial}{\partial t} \right) \otimes \left( -J^{-1} \left( \mu^s - \rho \times \pi^s \right) \right) + \left( \frac{\partial}{\partial s} \right) \otimes \left( I^{-1} \mu^t - \rho \times \pi^t \right)
%\]
From the above expressions we obtain the reduced Hamilton-Cartan equations of the Molecular Strand. Equations \eqref{eq: RHC PI} take the form, 
\begin{align}
&\frac{\partial \rho}{\partial s}=\pi^{s}-\rho \times\left(J \left(\mu^{s}-\rho \times \pi^{s}\right)\right), \label{eq: Mol.Strand.ros} \\
& \frac{\partial \rho}{\partial t}=-v^{2} \pi^{t}+\rho \times\left(I^{-1}\left(\mu^{t}-\rho \times \pi^{t}\right)\right), \label{eq: Mol.Strand.rot}
\end{align}
similarly, Equation \eqref{eq: RHC Z} transforms into,
\begin{multline} \label{eq: Mol.Strand.pi}
    \partial_{s} \pi^{s}+\partial_{t} \pi^{t} 
    - \pi^{s} \times \left(J^{-1} \left(\mu^{s} - \rho \times \pi^{s}\right)\right) \\
    + \pi^{t} \times \left(I^{-1}\left(\mu^{t}-\rho \times \pi^{t}\right)\right) 
    = -\frac{\partial U}{\partial \|\rho\| } \frac{\rho}{\|\rho\|},
\end{multline}
while Equation \eqref{eq: RHC MU} particularizes to:
\begin{equation} \label{eq: Mol.Strand.mu}
  \partial_s \mu^{s}+\partial_{t} \mu^{t}=-\mu^{s} \times\left(J^{-1}\left(\mu^{s}-\rho \times \pi^s\right)\right)+\mu^{t} \times\left(I^{-1}\left(\mu^{t}-\rho \times \pi^{t}\right)\right).  
\end{equation}
These equations of motion coincide with those obtained in \cite{affinered} from a Lagrangian perspective. Indeed, Equations \eqref{eq: Mol.Strand.ros} and \eqref{eq: Mol.Strand.rot} relate the multimomenta and the evolution of $\rho(s,t)$, while Equation \eqref{eq: Mol.Strand.pi} and \eqref{eq: Mol.Strand.mu} are respectively equivalent to $\quad \square_{\Omega, \omega} \rho =-2 U^{\prime}(\langle \rho, \rho \rangle)\rho$ and $\mathrm {div}^{\sigma}\delta l/\delta \sigma=0$ in \cite{affinered}.

\bibliography{references}{}
\bibliographystyle{abbrvbf}
\end{document}